\newcommand{\nset}[1]{\underline{#1}}
\newcommand{\cycles}[1]{\left(#1\right)}
\newcommand{\intersect}[1]{\bigl< #1 \bigr>}
\newcommand{\FiniteCount}[1]{\left| #1 \right|}
\newcommand{\abs}[1]{| #1 |}
\newcommand{\Orbit}{\mathcal{O}}
\newcommand{\TorusCover}{\mathcal{U}}
\newcommand{\symplecticQuotient}{//}
\newcommand{\Moduli}{\mathcal{M}}
\newcommand{\CompactModuli}{\overline{\mathcal{M}}}
\newcommand{\Graphs}{\mathcal{G}}
\DeclareMathOperator{\Met}{Met}
\DeclareMathOperator{\Sym}{Sym}
\DeclareMathOperator{\RG}{RG}
\newcommand{\CompactRG}{\overline{\RG}}
\newcommand{\ZZ}{\mathbb{Z}}
\newcommand{\Reals}{\mathbb{R}}
\newcommand{\RR}{\mathbb{R}}
\newcommand{\CC}{\mathbb{C}}
\newcommand{\TT}{\mathbb{T}}
\newcommand{\PP}{\mathbb{P}}
\newcommand{\Torus}{\TT}
\newcommand{\quotient}[2]{\faktor{#1}{#2}}
\DeclareMathOperator{\Vol}{Vol}
\DeclareMathOperator{\Aut}{Aut}
\DeclareMathOperator*{\Res}{Res}
\newtheorem{theorem}{Theorem}[section]
\newtheorem{lemma}[theorem]{Lemma}
\newtheorem{definition}[theorem]{Definition}
\newcommand{\disjoint}{\sqcup}
\numberwithin{equation}{section}
\begin{document}
\author{Julia Bennett}
\address{Bard College}
\email{juliacbennett@gmail.com}

\author{David Cochran}
\address{Virginia Commonwealth University}
\email{cochrandv@vcu.edu}

\author{Brad Safnuk}
\address{Central Michigan University}
\email{brad.safnuk@cmich.edu}

\author{Kaitlin Woskoff}
\address{Hartwick College}
\email{woskoffk@hartwick.edu}

\thanks{B.S. would like to thank Motohico Mulase and Yongbin Ruan for helpful comments and suggestions.
  Part of the research for this work was conducted as part of Central
    Michigan University's REU program in the summer of 2009. J.B.,
	D.C.
	  and K.W. were partially supported by NSF-REU grant \#DMS
	  08-51321.
	  }

\title[Topological recursion for symplectic volumes of {$\Moduli_{g,n}$}]{Topological recursion for symplectic volumes of moduli spaces of curves}
\begin{abstract}
We construct locally defined symplectic torus actions on ribbon graph
complexes. Symplectic reduction techniques allow for a recursive
formula for the symplectic volumes of these spaces. Taking the Laplace
transform results in the Eynard-Orantin recursion formulas for the
Airy curve $x = \frac{1}{2}y^2$.
\end{abstract}

\date{\today}
\maketitle
\tableofcontents

\section{Introduction}
Since Kontsevich's proof \cite{MR1171758} of the Witten conjecture \cite{MR1144529},
there has been a flurry of activity centered around the tautological
ring of the moduli space of curves, and expanded more generally to
Gromov-Witten invariants. However, many of the fundamental tools
developed by Kontsevich have remained comparatively ignored. 

In this
paper we focus on the combinatorially defined 2-form
$\Omega_{\vec{L}}$ used by
Kontsevich to represent the scaled sum of $\psi$-classes
\begin{equation*}
  [\Omega_{\vec{L}}] = \frac{1}{2}(L_1^2 \psi_1 + \cdots L_n^2 \psi_n).
\end{equation*}
In particular, this form leads to a family of symplectic structures on
the moduli space of curves, with the associated volumes encoding all
possible $\psi$-class intersection numbers. Although the
non-degeneracy of $\Omega$ appeared in Kontsevich's original work, the
symplectic nature of $\Omega$ was not taken advantage of in any
particular way.

We develop a recursive formula (an example of \emph{topological
recursion}, as explained below) for calculating the symplectic volume
of the moduli space of curves. In particular, if $\Vol_{g,n}(L_1,
\ldots, L_n)$ represents the symplectic volume of
$\CompactModuli_{g,n}$, calculated with repspect to the symplectic
form $\Omega_{\vec{L}}$, then we have
\begin{theorem}
  The symplectic volumes of moduli spaces of curves obey the recursion
  relation
\begin{equation}
  \begin{split}
L_1 & \Vol_{g,n}(L_1,  \ldots, L_n) \\
 &  = \sum_{j=2}^n
	\int_{\abs{L_1-L_j}}^{L_1 + L_j}dx\, \frac{x}{2} (L_1 + L_j - x ) 
	\Vol_{g,n-1}(x, L_2, \ldots, \hat{L}_j, \ldots, L_n) \\
 & \quad  + \sum_{j=2}^n \int_{0}^{\abs{L_1-L_j}} dx\, x f(x, L_1, L_j)
 \Vol_{g,n-1}(x, L_2, \ldots, \hat{L}_j, \ldots, L_n) \\
 & \quad + \iint_{0 \leq x+y \leq L_1} dxdy\,\frac{xy}{2}(L_1 - x-y)
	\Vol_{g-1,n+1}(x,y, L_2, \ldots, L_n) \\
 &  + \sum_{\substack{g_1 + g_2 = g \\ \mathcal{I} \disjoint \mathcal{J} = \nset{n}\setminus 1}}
	\iint_{0 \leq x+y \leq L_1} dxdy\,\frac{xy}{2}(L_1 - x-y)
	\Vol_{g_1,n_1}(x, L_{\mathcal{I}})\Vol_{g_2,n_2}(y, L_{\mathcal{J}}),
  \end{split}
	\label{eq:IntroRecursionFormula}
\end{equation}
  subject to the initial conditions
  \begin{align}
	\Vol_{0,3}(L_1, L_2, L_3) &= 1 \label{eq:IntroVol-0-3}\\
	\Vol_{1,1}(L) &= \frac{1}{48}L^2 \label{eq:IntroVol-1-1},
  \end{align}
  and $\Vol_{g,n}(L_1, \ldots, L_n) = 0$ if $2g -2 + n <= 0$.
\end{theorem}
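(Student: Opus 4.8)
The plan is to adapt Mirzakhani's integration-and-unfolding scheme to Kontsevich's combinatorial model, replacing the role of the McShane identity with the elementary additivity of edge lengths and replacing cut-and-paste along geodesics with symplectic reduction along the edges of the ribbon graph. First I would recall the cell decomposition of $\CompactModuli_{g,n}$ by metric ribbon graphs, on whose top (trivalent) cells the form $\Omega_{\vec L}$ restricts to an explicit symplectic form in the edge-length coordinates, so that $\Vol_{g,n}(\vec L) = \int_{\CompactModuli_{g,n}} \Omega_{\vec L}^{d}/d!$ with $d = 3g-3+n$. The crucial elementary observation is that the perimeter of the first boundary is a linear function of the edge lengths, $L_1 = \sum_{e} m_e \ell_e$, where the sum is over edges $e$ bordering face $1$ and $m_e \in \{1,2\}$ counts how many of the two sides of $e$ lie on face $1$.

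Multiplying the volume by $L_1$ and inserting this expansion, I would write $L_1\Vol_{g,n}(\vec L)$ as a sum over the boundary-$1$ edges $e$ of the integrals $\int m_e\,\ell_e\,\Omega_{\vec L}^{d}/d!$. I would then classify each such edge by its combinatorial type: (i) $e$ borders face $1$ on one side and a distinct face $j$ on the other; (ii) $e$ borders face $1$ on both sides and is non-separating; (iii) $e$ borders face $1$ on both sides and is separating. These three types are designed to produce, respectively, the sums over $j$, the $\Vol_{g-1,n+1}$ term, and the $\Vol_{g_1,n_1}\Vol_{g_2,n_2}$ term of the recursion.

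For each type I would invoke the locally defined symplectic torus action attached to the distinguished edge and its incident vertices. Symplectic reduction identifies the reduced space with a combinatorial moduli space of strictly lower complexity, whose new boundary perimeters are $x = L_1 + L_j - 2\ell_e$ in type (i) and a pair $x,y$ with $x + y = L_1 - 2\ell_e$ in types (ii) and (iii). Integrating out the torus (fiber) directions via Duistermaat--Heckman contributes a factor equal to the new perimeter(s) — the rotation coordinate of each new boundary ranges over a circle of that circumference — while the change of variables from $\ell_e$ to $x$ (or to $x,y$) supplies the Jacobian, the requisite numerical factors, and, through the constraints $0 \le \ell_e \le \min(L_1,L_j)$ and $\ell_e \ge 0$, the integration domains $|L_1 - L_j| \le x \le L_1 + L_j$ and $0 \le x+y \le L_1$. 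This reproduces the polynomial kernels $\tfrac{x}{2}(L_1 + L_j - x)$ and $\tfrac{xy}{2}(L_1 - x - y)$; the complementary range $0 \le x \le |L_1 - L_j|$ carrying the kernel $x f(x,L_1,L_j)$ arises from the subconfiguration in which $\ell_e$ exceeds the smaller perimeter, and pinning down $f$ is part of the same computation.

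Finally I would verify the base cases by direct enumeration: $\CompactModuli_{0,3}$ is a single point, so $\Vol_{0,3} = 1$, and the handful of ribbon graphs for $\CompactModuli_{1,1}$ yield $\Vol_{1,1}(L) = \tfrac{1}{48}L^2$ after integrating $\Omega_{\vec L}$ against the normalization $[\Omega_{\vec L}] = \tfrac12 L^2\psi_1$, with the vanishing for $2g-2+n \le 0$ built into the conventions. The main obstacle is twofold, and both parts are local-to-global issues on the ribbon graph complex: first, constructing the edge-wise torus actions so that they are genuinely Hamiltonian and patch coherently across the walls separating cells; and second, executing the reduction-kernel computation with complete care for the symmetry factors — the $\mathbb{Z}/2$ arising from the two occurrences of a doubly-incident edge, together with the automorphism weights — so that the separating and non-separating contributions emerge with exactly the multiplicities shown.
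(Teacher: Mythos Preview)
Your overall strategy is precisely the one the paper carries out: write $L_1=\sum m_e\ell_e$ to obtain a partition of unity indexed by the edges on the first face, attach to each edge a locally defined Hamiltonian torus action, identify the symplectic quotients with ribbon graph complexes of lower complexity, and read off the kernels from Duistermaat--Heckman together with the change of variables $\ell_e\mapsto x$ (or $(x,y)$). Your identification of the kernels $\tfrac{x}{2}(L_1+L_j-x)$ and $\tfrac{xy}{2}(L_1-x-y)$, your awareness of the $\mathbb{Z}/2$ from doubly-incident edges versus the $x\leftrightarrow y$ symmetry, and your treatment of the base cases all match the paper.

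There is one genuine imprecision. You propose three edge types and suggest that the complementary range $0\le x\le |L_1-L_j|$ with kernel $xf(x,L_1,L_j)$ emerges from type~(i) ``in the subconfiguration in which $\ell_e$ exceeds the smaller perimeter.'' That is not how the paper obtains this term, and it is not clear that your description can be made to work: for a simple type~(i) edge one always has $\ell_e\le\min(L_1,L_j)$, so there is no such subrange. In the paper the $f$--term is a \emph{fourth} combinatorial type: the edge $e$ is part of a \emph{lollipop} (a loop attached by a stick), and the relevant operation removes the entire lollipop rather than a single edge. The partition weight is then the total contribution of the lollipop's edges to boundary~$1$, which equals $L_1$ when $L_1<L_j$ and $L_1-x$ when $L_1>L_j$; this is exactly $f(x,L_1,L_j)$, and the resulting new boundary length $x$ naturally ranges over $[0,|L_1-L_j|]$. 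So the missing ingredient in your classification is to separate out the lollipop configuration as its own case with its own (degenerate) reduction, rather than trying to extract it as a limit of the theta-graph case.
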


The key technique used in the proof involves constructing Hamiltonian
torus actions which act locally on moduli space (cf \cite{MR2503971} for a
related, but different toric symmetry on moduli of curves).

We show that the above recursion has as a simple corollary the DVV
formula \cite{MR1083914} for $\psi$-class intersections:
\begin{equation}
  \begin{split}
	\intersect{\tau_{d_1} \cdots \tau_{d_n}}_{g} &=
	\sum_{j=2}^{n} \frac{(2d_1 + 2d_j - 1)!!}{(2d_1 + 1)!! (2d_j -
	1)!!} \intersect{\tau_{d_1 + d_j - 1} \tau_{d_{\nset{n} \setminus
	\{1, j\} }}}_{g} \\
	& \quad + \frac{1}{2} \sum_{a+b = d_1 - 2}
	\frac{(2a+1)!!(2b+1)!!}{(2d_1+1)!!}
	\Biggl[ \intersect{\tau_a\tau_b \tau_{d_{\nset{n}\setminus
	1}}}_{g-1}  \\
	&  \quad\quad\quad + \sum_{\substack{g_1 + g_2 = g \\ \mathcal{I} \sqcup
	\mathcal{J} = \nset{n} \setminus 1}}^{\text{stable}} 
	\intersect{\tau_a\tau_{d_{\mathcal{I}}}}_{g_1}
	\intersect{\tau_b \tau_{d_{\mathcal{J}}}}_{g_2}\Biggr].
  \end{split}
  \label{eq:IntroDVV}
\end{equation}
thus providing yet another proof of the Witten-Kontsevich theorem.

In addition, by defining 
\begin{equation*}
  W_{g,n}(z_1, \ldots, z_n) = \int_{\RR_+^n} e^{-\sum z_i L_i} 
   \Vol_{g,n}(L_1, \ldots, L_n)\prod L_i\,dL_i,
\end{equation*}
and taking the Laplace transform of recursion relation
\eqref{eq:IntroRecursionFormula} we arrive at the equivalent recursion
formula
\begin{equation}
  \begin{split}
  W_{g,n}(z_1, \ldots, z_n) &= \sum_{j=2}^{n} -\frac{\partial}{\partial z_j}
  \biggl[ \frac{z_j}{(z_1 z_j)^2(z_1^2 - z_j^2)} 
  \Bigl( z_1^2 W_{g, n-1}(z_2, \ldots, z_n) \\
  & \quad\quad\hspace{15mm}- z_j^2 W_{g, n-1}(z_1, \ldots, \hat{z}_j, \ldots, z_n)
  \Bigr)\biggr] \\
  & \quad+ \frac{1}{2z_1^2} W_{g-1, n+1}(z_1, z_1, \ldots, z_n) \\
  & \quad+ \frac{1}{2z_1^2} \sum_{\substack{g_1 + g_2 = g \\ \mathcal{I}
  \sqcup \mathcal{J} = \nset{n} \setminus 1}} 
  W_{g_1, n_1}(z_1, z_{\mathcal{I}})W_{g_2, n_2}(z_1,
  z_{\mathcal{J}}),
\end{split}
  \label{eq:IntroEO_Recursion}
\end{equation}
We prove that \eqref{eq:IntroEO_Recursion} is an example of the
Eynard-Orantin recursion formula \cite{MR2346575} for the spectral curve
$x = \frac{1}{2}y^2$.

We should emphasize that, apart from recursion equation
\eqref{eq:IntroRecursionFormula}, none of the results of the paper are
new. For example, there are by now many proofs of the
Witten-Kontsevich theorem \cite{MR1171758, MR2483941, 2005math8384K,
MR2257394, MR2328716, MR2503971, mulase-2009}, several of which use techniques
similar to what was done in the present work. In addition, it has been
shown by Eynard and Orantin \cite{MR2519749} that the Airy curve encodes the
$\psi$-class intersection numbers.

Our aim then is not to produce new results in a well-mined
field, but rather present a novel point of view which has wider
applicability and ramifications. For example, our work makes it
geometrically clear \emph{why} it is that the Airy curve encodes intersection
numbers - a point of view lacking in the literature. In addition, the
techniques developed have a much wider applicability. For example,
similar ideas can be used to motivate a generalization of
Eynard-Orantin invariants \cite{Safnuk-2010} which captures both the
generalized Kontsevich matrix model (and in the process intersection
numbers of $\psi$-classes over Witten cycles) and intersection theory
for $r$-spin curves. As well, although the Airy curve is the simplest
non-trivial example of the Eynard-Orantin invariants, it is also
universal, in the sense that locally all spectral curves look like the
Airy curve. Having a good understanding of the local structure of
Eynard-Orantin invariants allows one to extrapolate to arbitrary
spectral curves by a perturbation type argument
\cite{SafnukMulase-2010}. As well, it should be pointed out that the
recursion formula proven here played an important role in deriving a
new proof \cite{2010arXiv1009.2055C} of Kontsevich's integration constant $\rho =
2^{5g-5+2n}$, first appearing in \cite{MR1171758}, relating the symplectic volume of the ribbon graph complex to
the Euclidean push-foward measure.

This paper is organized as follows. In Section~\ref{sect:Background} we survey
the definitions and constructions needed in the paper. We define the
ribbon graph complex, and the symplectic 2-form $\Omega$ originally
constructed by Kontsevich. We discuss the relationship to tautological
classes on the moduli space of stable curves, and also consider the
Eynard-Orantin invariants, focusing on the relevant case of when the
spectral curve is $\PP^1$. Finally, we survey the tools from
symplectic geometry which are necessary in the sequel.
In Section~\ref{sect:LocalStructure}, we construct the local torus symmetries on the
ribbon graph complex and show that the associated symplectic quotients
are also ribbon graph complexes. In Section~\ref{sect:RecursionFormula}, 
we use the local picture to derive
recursion equation \eqref{eq:IntroRecursionFormula}, and provide full consideration of the
base case volumes \eqref{eq:IntroVol-0-3} and \eqref{eq:IntroVol-1-1}.
In Section~\ref{sect:Virasoro}
we prove that our recursion relation is equivalent to the DVV equation
(Virasoro constraint) for $\psi$-class intersections on
$\CompactModuli_{g,n}$, while in Section~\ref{sect:EO_Recursion} we prove that it
is equivalent to the Eynard-Orantin recursion for the spectral curve
$x=\frac{1}{2}y^2$. 
\section{Background}
\label{sect:Background}
\subsection{Ribbon graph complexes}

A ribbon graph is a graph with a cyclic ordering assigned to the
half-edges incident on each vertex. The cyclic ordering allows the
edges of the graph to be fattened in a canonical way into ribbons,
with the resulting surface having an orientation which induces
the cyclic ordering at each vertex. Some examples, along with the
associated surfaces, are presented in
Figure~\ref{fig:RibbonGraphExamples}, where the cyclic ordering is
implied from the standard counter-clockwise orientation of the plane.
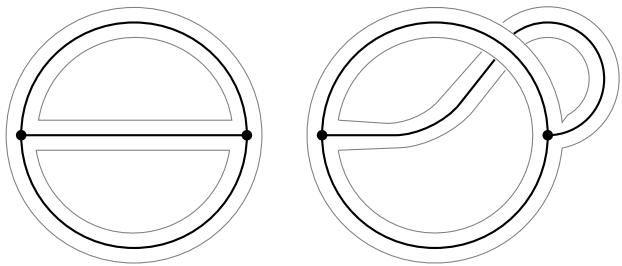
\begin{figure}
  \begin{tikzpicture}
	\draw[thick] (-2, 0) circle (1.5cm);
	\draw[thick] (-3.5, 0) -- (-0.5, 0);
	\draw[thick] (2, 0) circle (1.5cm);
	\fill (-3.5, 0) circle (2pt);
	\fill (-0.5, 0) circle (2pt);
	\draw[gray] (-2,0) circle (1.7cm);
	\draw[gray] (-0.7,0.2) arc (8.8:171.2:1.3cm) -- cycle;
	\draw[gray] (-3.3,-0.2) arc (188.8:351.2:1.3cm) -- cycle;
	\draw[thick, rounded corners=15pt] (3.5, 0.75) +(-90:0.75) arc (-90:127:0.75)
	  (3.5, 0.75) +(160:0.75) -- (2, 0) -- (0.5, 0);
	\fill (0.5, 0) circle (2pt);
	\fill (3.5, 0) circle (2pt);
	\draw[gray] (3.5, 0.75) +(122:0.55) arc (122:-61.2:0.55) -- (3.6913, 0.1718) arc (5.8:354.2:1.7)  
	arc (-78.3:129:0.95);
	\draw[gray] (2, 0) +(-171.2:1.3) arc (-171.2:171.2:1.3);
	\draw[gray, rounded corners=12pt] (3.5, 0.75) ++(162:0.75)
	+(-0.164, 0.13) -- (1.7586, 0.1314) -- (0.72, 0.2);
	\draw[gray, rounded corners=17pt] (3.5, 0.75) ++(160:0.75)
	+(0.1414, -0.1414) -- (2.1414, -0.1414) -- (0.72, -0.2);
  \end{tikzpicture}
\caption{Ribbons graphs of type $(0,3)$ and $(1,1)$.}
\label{fig:RibbonGraphExamples}
\end{figure}

A more precise way of defining ribbon graphs, which better elucidates
their automorphisms, comes from using permutation data. Let $\gamma \in
S_k$ be a permutation of the set $\nset{k} = \{1, 2, \ldots, k\}$. Then
the notation $\cycles{\gamma}$ represents the set of disjoint orbits
(cycles) or $\gamma$, and $\FiniteCount{\cycles{\gamma}}$ denotes the
number of orbits. For example, if $\gamma = (134)(2)(56)$ then
$(\gamma) = \left\{ (134), (2), (56) \right\}$, while
$\FiniteCount{\cycles{\gamma}} = 3$.

\begin{definition}
A \emph{ribbon graph} is a collection $(\gamma_0, \gamma_1, \gamma_2, b)$
such that
\begin{enumerate}
\item Each $\gamma_i$ is a permutation in $S_{2k}$ for some fixed $k > 0$.

\item $\gamma_1$ is a fixed-point-free involution.

\item $\gamma_0$ contains no cycles of length 1 or 2.

\item $\gamma_2 = \gamma_0^{-1}\circ\gamma_1$, so strictly speaking, is not a necessary part of the definition of the ribbon graph.

\item $b: \cycles{\gamma_2} \rightarrow \{1, 2, \ldots, \FiniteCount{\cycles{\gamma_2}} \}$ is a bijection.

\item The group generated by $\gamma_0$ and $\gamma_1$ acts transitively on $\nset{2k}$.

\end{enumerate}

\end{definition}

The map $b$ is called the boundary labeling of the graph, which will become clear in what follows. We also have the numbers $n = \FiniteCount{\cycles{\gamma_2}}$, 
$e = \FiniteCount{\cycles{\gamma_1}}$ and $v = \FiniteCount{\cycles{\gamma_0}}$.
The \emph{type} of the ribbon graph is the pair  $(g, n)$ where
$$
  g = 1 - \frac{1}{2}(v-e + n).
$$

To associate the above definition with an actual graph, we identify
$\cycles{\gamma_0}$ with the set of vertices of our graph,
$\cycles{\gamma_1}$ with the set of edges and $\cycles{\gamma_2}$
with the set of boundary paths. In particular, we take
$\FiniteCount{\cycles{\gamma_0}}$ vertices and to each vertex we
attach a number of half-edges equal to the length of the corresponding
cycle in $\gamma_0$. Each vertex can be cyclically ordered by
$\gamma_0$. The half-edges are glued to each other by using
$\gamma_1$. The construction
of the ribbon graph from the permutations is illustrated in
Figure~\ref{fig:RibbonGraphConstruction}.
 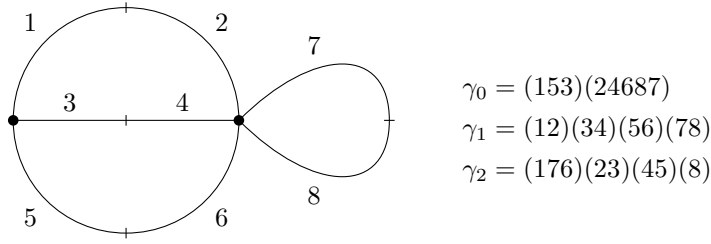
\begin{figure}
  \begin{tikzpicture}
	\draw (0,0) circle (1.5) (-1.5,0) -- (1.5,0);
	\draw (1.5,0) .. controls (2.5, 1.0) and (3.5, 1.0) .. (3.5, 0) --
	(3.5,0) .. controls (3.5, -1.0) and (2.5, -1.0) .. (1.5,0);
	\fill (-1.5,0) circle (2pt)  (1.5, 0) circle (2pt);
	\draw (45:1.5) node[anchor=south west] {$2$};
	\draw
	  (135:1.5) node[anchor=south east] {1}
	  (225:1.5) node[anchor=north east] {5}
	  (-45:1.5) node[anchor=north west] {6}
	  (-0.75, 0) node[anchor=south] {3}
	  (0.75, 0) node[anchor=south] {4}
	  (2.5, 1) node {7}
	  (2.5, -1) node {8};
	\draw (0,1.5) +(0,2pt) -- +(0,-2pt)
	  (0,-1.5) +(0,2pt) -- +(0,-2pt)
	  (0,0) +(0,2pt) -- +(0,-2pt)
	  (3.5,0) +(-2pt,0) -- +(2pt,0);
	\draw[xshift=4cm]
	  node[right, text width=4cm]
	  {
		\begin{align*}
		  \gamma_0 &= (153)(24687) \\
		  \gamma_1 &= (12)(34)(56)(78) \\
		  \gamma_2 &= (176)(23)(45)(8)
		\end{align*}
	  };
  \end{tikzpicture}
  \caption{Constructing a ribbon graph from half-edge permutations.}
\label{fig:RibbonGraphConstruction}
\end{figure}

Note that a ribbon graph constructed in this way has its half-edges
labeled; however, we do not wish to distinguish ribbon graphs which
only differ by their half-edge labelings. This motivates the notion of
equivalence of ribbon graphs: Two ribbon graphs $(\gamma_0,
\gamma_1, b)$ and $(\gamma'_0, \gamma'_1, b')$ are \emph{equivalent} if there
is a bijection $\alpha: \nset{2k} \rightarrow \nset{2k}$ such that
$\gamma'_i\circ\alpha = \alpha\circ\gamma_i$, and $b = b'\circ\alpha$.

One can, in a canonical way, construct an oriented surface from a
ribbon graph by replacing each vertex neighborhood with an oriented
disk, then using the edges to attach the disks to each other by
ribbons, making sure to preserve the orientation at each vertex.
Figure~\ref{fig:RibbonGraphExamples} illustrates two ribbon graphs
with their associated surfaces. It is straightforward to verify
that the surface associated to a given ribbon graph has genus $g$ and
$n$ holes, which explains the definition of the type of a graph. Note
that condition (6) in the definition forces the graph (and hence
surface) to be connected. There are circumstances when a disconnected
ribbon graph is allowed, but the changes to the theory are minor and
easily worked out.

In what follows, if $j\in \nset{2k}$ then the vertex incident to the
half-edge $j$ is denoted $[j]_0$. This can also be thought of as the
cycle of $\gamma_0$ which contains $j$. Similarly, the edge containing
$j$ is denoted $[j]_1$ while the corresponding boundary component is
$[j]_2$. We see that the valence (or degree) of a vertex (number of half-edges
incident to it) equals the size of its $\gamma_0$ orbit. In particular,
condition (3) requires that a ribbon graph has no 1- or 2-valent vertices.

We define $\Graphs_{g,n}$ to be the set of all equivalence classes of
ribbon graphs of type $(g,n)$. Because of the degree restriction on
vertices from condition (3), there is an upper bound of $12g-12 + 6n$
on the number of half-edges of a graph, realized exactly when the
graph is \emph{trivalent} -- all vertices having degree 3. As a
result, there are a finite number of equivalence classes of graphs of
a fixed type. Note that, in general, a ribbon graph
$G \in \Graphs_{g,n}$ may have automorphisms (self-equivalences)
and we let $\Aut(G)$ denote the automorphism group of $G$. For
example, $\Graphs_{1,1}$ consists of two graphs, as pictured in
Figure~\ref{fig:1-1Graphs}, with automorphism groups $\Aut(G_1) = \ZZ_6$
and $\Aut(G_2) = \ZZ_4$,
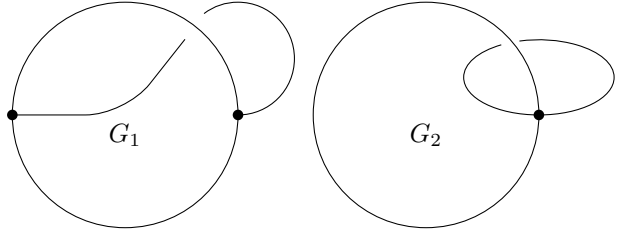
\begin{figure}
  \begin{tikzpicture}
	\draw (2, 0) circle (1.5cm);
	\draw[rounded corners=15pt] (3.5, 0.75) +(-90:0.75) arc
	  (-90:127:0.75)
	   (3.5, 0.75) +(160:0.75) -- (2, 0) -- (0.5, 0)
	   (2, 0) node[anchor=north] {$G_1$};
	\fill (0.5, 0) circle (2pt);
	\fill (3.5, 0) circle (2pt);
	\draw[xshift=4cm] (2, 0) circle (1.5cm)
	 (3.5, 0) arc (270:120:1cm and 0.5cm)
	 (3.5, 0) arc (-90:105:1cm and 0.5cm)
	 (2, 0) node[anchor=north] {$G_2$};
	 \fill[xshift=4cm] (3.5, 0) circle (2pt);
  \end{tikzpicture}
\caption{Set of all ribbon graphs of type $(1,1)$.}
\label{fig:1-1Graphs}
\end{figure}
while $\Graphs_{0,3}$ consists of seven distinct graphs, presented in
Figure~\ref{fig:0-3Graphs}, all with
trivial automorphism groups. Note that the graphs have non-trivial
automorphisms which permute the boundaries, which reduce the number of
distinct boundary labelings.
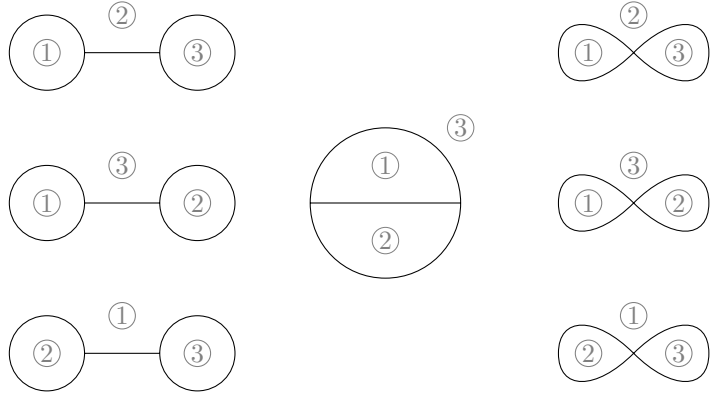
\begin{figure}
  \begin{tikzpicture}
	\draw (0,0) circle (1cm)
	  (-1, 0) -- (1, 0);
	\draw[ultra thin, gray]
	(0, 0.5) circle (0.5em) node {1} 
	  (0, -0.5) circle (0.5em) node {2}
	  (1,1) circle (0.5em) node {3};
	\draw[xshift=-3.5cm] (-1, 0) circle (0.5cm)
	  (1, 0) circle (0.5cm)
	  (-0.5, 0) -- (0.5, 0);
	\draw[xshift=-3.5cm, gray]
	  (-1,0) circle (0.5em) node {1}
	  (1,0) circle (0.5em) node{2}
	  (0,0.5) circle (0.5em) node {3};
	\draw[xshift=-3.5cm, yshift=2cm] (-1, 0) circle (0.5cm)
	  (1, 0) circle (0.5cm)
	  (-0.5, 0) -- (0.5, 0);
	\draw[xshift=-3.5cm, yshift=2cm, gray]
	  (-1,0) circle (0.5em) node {1}
	  (1,0) circle (0.5em) node{3}
	  (0,0.5) circle (0.5em) node {2};
	\draw[xshift=-3.5cm, yshift=-2cm] (-1, 0) circle (0.5cm)
	  (1, 0) circle (0.5cm)
	  (-0.5, 0) -- (0.5, 0);
	\draw[xshift=-3.5cm, yshift=-2cm, gray]
	  (-1,0) circle (0.5em) node {2}
	  (1,0) circle (0.5em) node{3}
	  (0,0.5) circle (0.5em) node {1};
	\draw[xshift=3.3cm, yshift=0cm] (0,0) .. controls (0.5, 0.5) and (1, 0.5) ..
	  (1,0) -- (1,0) .. controls (1, -0.5) and (0.5, -0.5) .. (0,0)
	  (0,0) .. controls (-0.5, 0.5) and (-1, 0.5) .. (-1, 0) --
	  (-1, 0) .. controls (-1, -0.5) and (-0.5, -0.5) .. (0,0);
	\draw[xshift=3.3cm, yshift=0cm, gray] 
	  (-0.6, 0) circle (0.5em) node {1}
	  (0.6, 0) circle (0.5em) node {2}
	  (0, 0.5) circle (0.5em) node {3};
	\draw[xshift=3.3cm, yshift=2cm] (0,0) .. controls (0.5, 0.5) and (1, 0.5) ..
	  (1,0) -- (1,0) .. controls (1, -0.5) and (0.5, -0.5) .. (0,0)
	  (0,0) .. controls (-0.5, 0.5) and (-1, 0.5) .. (-1, 0) --
	  (-1, 0) .. controls (-1, -0.5) and (-0.5, -0.5) .. (0,0);
	\draw[xshift=3.3cm, yshift=2cm, gray] 
	  (-0.6, 0) circle (0.5em) node {1}
	  (0.6, 0) circle (0.5em) node {3}
	  (0, 0.5) circle (0.5em) node {2};
	\draw[xshift=3.3cm, yshift=-2cm] (0,0) .. controls (0.5, 0.5) and (1, 0.5) ..
	  (1,0) -- (1,0) .. controls (1, -0.5) and (0.5, -0.5) .. (0,0)
	  (0,0) .. controls (-0.5, 0.5) and (-1, 0.5) .. (-1, 0) --
	  (-1, 0) .. controls (-1, -0.5) and (-0.5, -0.5) .. (0,0);
	\draw[xshift=3.3cm, yshift=-2cm, gray] 
	  (-0.6, 0) circle (0.5em) node {2}
	  (0.6, 0) circle (0.5em) node {3}
	  (0, 0.5) circle (0.5em) node {1};
  \end{tikzpicture}
  \caption{Set of all ribbon graphs of type (0,3). Boundary labelings are
  indicated by the circled numbers.}
  \label{fig:0-3Graphs}
\end{figure}

A metric on a ribbon graph $G = (\gamma_0, \gamma_1, b)$ is a           
function $\ell: \cycles{\gamma_1} \rightarrow \RR_+$, from the          
set of edges to the positive reals. One can think of a metric as        
determining the length of each edge of a graph. Note that if $e =       
\FiniteCount{\cycles{\gamma_1}}$ is the number of edges of $G$, then    
an element of $\RR_+^{e}$ determines a metric on $G$. If $G$ has        
nontrivial automorphisms, they act nontrivially on $\RR_+^{e}$ by       
permuting the coordinates. Hence, we see that the set of all metrics on 
a graph is naturally identified with                                    
$$\Met(G) = \quotient{\RR_+^{e}}{\Aut(G)},
$$ 
and we define the ribbon graph complex of type $(g,n)$ by
$$
\RG_{g,n} = \bigsqcup_{G\in \Graphs_{g,n}} \Met(G).
$$

The ribbon graph complex can be given a topology by considering edge
collapsing: taking the limit of an edge length to 0, for any non-loop
edge, results in a ribbon
graph of the same type with the corresponding edge contracted. The resulting set of
metric ribbon graphs is glued to the face of the metric set of the
larger graph.  
The resulting topological
space has the structure of a connected differentiable orbifold of
dimension $6g - 6 + 3n$ \cite{MR928904, 1998math.ph.11024M}.

Given a metric ribbon graph, one can assign perimeters to each boundary
of the graph by adding together the lengths of all edges which appear on
the boundary. Note that, in general, each edge appears on two boundaries
(each half edge is part of a boundary), so that it is possible for an
edge to contribute twice to a perimeter. We denote the perimeter map
$$
 p:\RG_{g,n} \rightarrow \RR_+^n.
$$
 If $L_{\nset{n}}$ denotes the vector $(L_1, \ldots, L_n) \in \RR_+^n$
 then we define
 $$
  \RG_{g,n}(L_{\nset{n}}) = p^{-1}(L_{\nset{n}}).
 $$
 In other words, it is the set of metric ribbon graphs with fixed boundary lengths.
 
 In general, if $\Gamma \in \RG_{g,n}$ is a metric ribbon graph with half edge $i\in\nset{2k}$, we will denote the length of the edge $[i]_1$ by $\ell(\Gamma, i)$. If the graph is clear from the context we will use $\ell_i = \ell(\Gamma, i)$. In addition, the ribbon graph underlying $\Gamma$ will be denoted by $\FiniteCount{\Gamma}$.  We can think of the $\ell_i$'s as a set of functions (or local coordinates if we choose one $i$ for each edge) defined on $\Met(\Gamma)$.
 
 Let $d(i)$ denote the degree of the vertex $[i]_0$. To each half-edge $i$ we have the vector field
 $$
  \tau_i = \sum_{j=1}^{d(i)-1} (-1)^j \frac{\partial}{\partial\ell_{\gamma_0^ji}}.
 $$
 We also define vector fields assigned to each edge
 $$
 T_i = T_{\gamma_1i} = \tau_i + \tau_{\gamma_1i}.
 $$

In addition to the orbit notation $[i]_j$ described above for vertices, edges and boundaries of a ribbon graph, we also introduce the following edge-length notation: If boundary $k$ contains $m_k$ half-edges we label the lengths of those edges by $\ell_1^{[k]}, \ldots, \ell_{m_k}^{[k]}$. Note that the total ordering of the edges must preserve the inherent cyclic ordering of the boundary, but a choice has been made in creating this list (i.e. choosing a distinguished starting edge out of the cyclically ordered boundary edges). 
  
 Following Kontsevich \cite{MR1171758}, we construct $n$ 2-forms on the ribbon graph complex (one for each boundary) by
 $$
  \omega_k = \sum_{i=1}^{m_k - 1}\sum_{j=i+1}^{m_k}d\ell_i^{[k]}\wedge d\ell_j^{[k]},
 $$
 then set
 $$
 \Omega = \frac{1}{2} \sum_{k=1}^n \omega_k.
 $$

 Note that $\Omega$ is not invariant under changes in the choices of total ordering at each boundary. However, the difference is always an exact form with
 $$
 \Omega - \Omega' = \sum_{i=1}^{n} a_i dp_i
 $$
 where $a_i$ are constants. Hence $\left. \Omega\right|_{\RG_{g,n}(L_{\nset{n}})}$ is well-defined. Moreover, Kontsevich \cite{MR1171758} proved that it is non-degenerate when restricted to cells corresponding to graphs with no even-valent vertices.
 
 We are led to define
 $$
 \Vol_{g,n}(L_{\nset{n}}) = \int_{\RG_{g,n}(L_{\nset{n}})}e^{\Omega} =  \int_{\RG_{g,n}(L_{\nset{n}})} \frac{1}{d!}\Omega^d,
 $$
 where $d = 3g-3 + n$.

 In general, the dimension of $\RG_{g,n}$ is equal to $6g - 6 + 3n$, which corresponds with the number of edges in a \emph{trivalent} ribbon graph (all vertices have degree 3). Because they play a special role in what follows, we denote $\RG^3_{g,n}$ to be the space of trivalent metric ribbon graphs.
Although, strictly speaking, $\Omega^d$ is not a volume form, being
degenerate on ribbon graphs with even-valent vertices, it is
non-degenerate on the top-dimension strata
$\RG_{g,n}^3(L_{\nset{n}})$. Since integration over a set of measure 0
does not contribute, the volume is well-defined.
 \subsection{Intersection theory on $\CompactModuli_{g,n}$}
The primary motivation for studying the ribbon graph complex is
because of its close connection to the moduli space of curves
$\Moduli_{g,n}$. In fact, a result attributed to Mumford, Thurston and
Harer \cite{MR963064}
states that $\Moduli_{g,n}\times \Reals_+^n$ is diffeomorphic (in the
sense of orbifolds) to $\RG_{g,n}$. This result follows by examining
foliations from Strebel differentials on surfaces. A similar result
was proven by Bowditch-Epstein \cite{MR935529}, and independently by Penner
\cite{MR919235} using hyperbolic geometry.

These results were utilized by Kontsevich \cite{MR1171758} to great effect in
his celebrated proof of the Witten conjecture \cite{MR1144529}. By careful
analysis of degenerating ribbon graphs, he was able to use the ribbon
graph complex in calculating intersection numbers over the
Deligne-Mumford compactification of moduli space
$\CompactModuli_{g,n}$. To be precise there is a compactification of
the ribbon graph complex $\CompactRG_{g,n}(L)$ on which the symplectic form
$\Omega$ extends and a map
\begin{equation*}
  q: \CompactModuli_{g,n} \rightarrow \CompactRG_{g,n}(L)
\end{equation*}
for which $q^*\Omega$ represents the sum tautological classes
$\frac{1}{2}(L_1^2\psi_1 + \cdots + L_n^2 \psi_n)$.

Hence, one interpretation of the symplectic volume discussed in the
previous section is that it encodes all intersections of
$\psi$-classes
on $\CompactModuli_{g,n}$. In fact,
\begin{equation}
  \Vol_{g,n}(L_N) = \sum_{k_1 + \cdots + k_n = d}
  \prod_{j=1}^{n}\frac{L_j^{2k_j}}{2^{k_j}k_j!}
  \int_{\CompactModuli_{g,n}} \psi_1^{k_1}\cdots\psi_n^{k_n}.
  \label{eq:VolumePsiRelation}
\end{equation}

 \subsection{Eynard-Orantin topological recursion}
 The topological recursion formula presented in
 Section~\ref{sect:RecursionFormula} fits
 into the framework developed by Eynard and Orantin \cite{MR2346575}, which
 we now proceed to outline. 

 Consider a plane algebraic curve $C$ specified by a
 polynomial equation
 \begin{align*}
   C^o &= \left\{ (x,y)\in \CC^2\, |\, P(x, y) = 0 \right\} \\
   C &= \overline{C^o}.
 \end{align*}
 It is convenient to think of $x$ and $y$ as a choice of two
 meromorphic functions on $C$. In other words, given a local
 coordinate $z\in C$ we have
 \begin{align*}
   x &= x(z) \\
   y &= y(z).
 \end{align*}
We require the projection of $C$ onto the $x$-axis to be
\emph{generic}: branch points are isolated and degree at most two
(simply ramified).

Note that the theory developed by Eynard and Orantin applies in a
wider setting than presented here, but restricting $x$ and $y$ to be
rational functions is more than sufficient for our needs, and makes
the theory somewhat simpler. In what follows, we make the additional
(unnecessary) assumption that $C=\PP^1$, with global
coordinate $z$. 

To the data of a spectral curve, one can associate an infinite tower
of symmetric multilinear meromorphic differentials $\mathcal{W}_{g,n}(z_1,
\ldots, z_n) = W_{g,n}(z_1, \ldots,
z_n)dz_1 \otimes \cdots \otimes dz_n$ defined on $\Sym^nC$. They are
constructed in a recursive manner, by performing residue computations
around the branch points of the $x$-projection.

In particular, the base cases of the recursion
are
\begin{align*}
  \mathcal{W}_{0,1}(z) &= 0 \\
  \mathcal{W}_{0,2}(z_1, z_2) &= \frac{dz_1\otimes dz_2}{(z_1 - z_2)^2}.
\end{align*}
$\mathcal{W}_{0,2}$ is the \emph{Cauchy differentiation kernel},
defined by
the property that for any function $f: C \rightarrow \PP^1$
\begin{equation*}
  f'(z)dz = \Res_{\zeta \rightarrow z} f(\zeta)\mathcal{W}_{0,2}(\zeta, z).  
\end{equation*}
Note that the differentiation kernel is also referred to as the \emph{Bergmann
kernel} in the literature \cite{MR2346575}. In addition, if $C$ has genus
greater than zero, then the $\mathcal{A}$-cycle integrals of the
kernel need to be specified in order
to have a unique differential form.

A few additional constructions are necessary to derive the
higher-order invariants. The first is a notion of \emph{conjugate
point:} Let $a_1, \ldots, a_k$ be the branch points of the projection
of $C$ onto the $x$-axis. If $z\in C$ is sufficiently close to a
branch point $a_i$ then there is a unique point $\bar{z} \neq z$ with
the same $x$-projection as $z$ (due to the fact that all branch points
are simple). Note that unlike complex conjugation, the locally defined
involution $z \mapsto \bar{z}$ is holomorphic.

We also make use of the \emph{Eynard kernel}, defined as
\begin{equation*}
  E_i(z_1, z_2) = \frac{1}{2} \int_{z_1}^{\bar{z}_1} W_{0,2}(\zeta,
  z_2)d\zeta  \frac{dz_2}{\bigr(y(z_1) - y(\bar{z}_1)\bigr)dx(z_1)},
\end{equation*}
where $E_i$ is defined locally around the branch point $a_i$ (from
which the conjugation operation is defined) and  the operator on 
differential forms $\frac{1}{dx(z)}$ means
contraction with the vector field 
\begin{equation*}
  \frac{1}{\frac{dx}{dz}}\frac{d}{dz}.
\end{equation*}

Then, the higher order Eynard-Orantin invariants are defined by the
recursion formula
\begin{multline*}
  \mathcal{W}_{g,n+1}(z, z_{\nset{n}}) = \sum_i \Res_{\zeta\rightarrow a_i}
  E_i(z, \zeta) \Bigl[
  \mathcal{W}_{g-1, n+2}(\zeta, \bar{\zeta}, z_{\nset{n}}) \\
  + \sum_{g_1 + g_2 = g} \sum_{\mathcal{I} \sqcup \mathcal{J} =
  \nset{n}} \mathcal{W}_{g_1, \FiniteCount{\mathcal{I}} + 1}(\zeta, z_{\mathcal{I}})
  \mathcal{W}_{g_2, \FiniteCount{\mathcal{J}} + 1}(\bar{\zeta},
  z_{\mathcal{J}})
  \Bigr].
\end{multline*}

The Eynard-Orantin invariants have appeared in a broad array of
seemingly unconnected mathematics. Some
highlights include
\begin{enumerate}

\item The correlation functions for $y = \sin(\sqrt{x})$ are related to (via the Laplace transform) the Weil-Petersson symplectic volumes for moduli spaces of bordered Riemann surfaces \cite{eynard-orantin-2007}, and the recursion formula is equivalent to the recursion formula first discovered by Mirzakhani \cite{MR2264808,MR2257394} in the context of hyperbolic geometry.

\item The recursion for intersection numbers of mixed $\psi$ and
  $\kappa_1$ classes originally discovered by Mulase and Safnuk
  \cite{MR2379144}, and then extended to arbitrary $\kappa$ classes by Liu and Xu \cite{MR2357474, MR2348851, MR2482127} were put into the framework of topological recursion by Eynard \cite{eynard-2007}.

\item Topological recursion can be used to calculate the generating function enumerating partitions with the Plancheral measure \cite{MR2439683, MR2569934}.

\item Correlation functions for the curve of the mirror dual to a 3-dimensional toric Calabi-Yau manifold are conjectured to generate the Gromov-Witten potential of the manifold. \cite{MR2480744,Eynard:vn,Eynard:yq}.

\item The Lambert curve $x = ye^{-x}$ gives the generating functions
  for Hurwitz numbers  \cite{borot-2009,MR2483754,eynard-2009}, giving
  a positive resolution to a conjecture raised by Bouchard and
  Mari\~no \cite{MR2483754}.

  \item The curve $x = y + 1/y$ was shown by Norbury \cite{norbury-2009} to compute the
  number of lattice points in the moduli space of curves. Refer to
  \cite{2010arXiv1009.2055C} for a related construction.
\end{enumerate}


The simplest non-trivial example of a spectral curve is the Airy curve
\begin{align*}
  x &= \frac{1}{2}z^2 \\
  y &= z,
\end{align*}
which is a rational curve, with global coordinate $z$. There is a single branch point at $(0,0)$, with
a globally defined involution $z \mapsto -z$.

The Cauchy differentiation kernel for the Riemann sphere is given by
\begin{equation*}
  \mathcal{W}_{0,2}(z_1, z_2) = \frac{dz_1 \otimes dz_2}{(z_1 -
  z_2)^2},
\end{equation*}
while the Eynard kernel at the unique branch point is
\begin{equation*}
  E(z_1, z_2) = \frac{1}{z_1^2 - z_2^2}\frac{dz_2}{2z_1dz_1}.
\end{equation*}
This yields a recursion formula
\begin{equation}
  \begin{split}
	\mathcal{W}_{g,n}(z_1, \ldots, z_n) & = 
	\Res_{\zeta\rightarrow 0} \frac{dz_1}{2\zeta(\zeta^2 - z_1^2)
	d\zeta} \biggl( \mathcal{W}_{g-1, n+1}(\zeta, -\zeta, z_2, \ldots,
	z_n) \\
	& \quad + \sum_{\substack{g_1 + g_2 = g \\ \mathcal{I} \sqcup
	\mathcal{J} = \nset{n} \setminus \{ 1 \} }} \mathcal{W}_{g_1,
	n_1}(\zeta, z_{\mathcal{I}}) \mathcal{W}_{g_2, n_2}(-\zeta,
	z_{\mathcal{J}})
	\biggr)
  \end{split}
  \label{eq:AiryResidueRecursion}
\end{equation}

Applying the Eynard-Orantin recursion to the first few cases gives
\begin{align*}
  \mathcal{W}_{0,3}(z_1, z_2, z_3) &= \frac{dz_1\otimes dz_2\otimes
  dz_3}{z_1^2 z_2^2 z_3^2} \\
  \mathcal{W}_{1,1}(z) &= \frac{dz}{8z^4} \\
  \mathcal{W}_{0,4}(z_1, z_2, z_3, z_4) &= 3 
  \left( \frac{1}{z_1^2} + \frac{1}{z_2^2} + \frac{1}{z_3^2} +
  \frac{1}{z_4^2} \right)\prod_{i=1}^{4} \frac{dz_i}{z_i^2}.
\end{align*}


 \subsection{Symplectic geometry}
 \label{sect:SymplecticReduction}
The goal of the paper is to calculate the symplectic volume of the
ribbon graph complex. The technique presented relies on several
standard constructions from symplectic geometry which we now review.

The pair $(M, \omega)$ is a symplectic manifold if $M$ is a smooth
$2n$-manifold and $\omega$ is a closed, non-degenerate 2-form on $M$.
The non-degeneracy condition of $\omega$ forces $M$ to be
even-dimensional. In general, if $(M, \omega)$ is a symplectic
manifold then the top-dimension form $\frac{1}{n}\omega^n$ is
everywhere non-degenerate, and therefore is a volume form. In the case
that $M$ is compact (or $\omega^n$ is integrable) we define
\begin{equation*}
  \Vol(M, \omega) = \int_M \frac{1}{n!}\omega^n.
\end{equation*}
Note in particular that the symplectic volume depends on the form
$\omega$; however when $M$ is compact the volume is an invariant of
the cohomology class of $\omega$.

Suppose that $(M, \omega)$ has a $k$-torus symmetry. In particular,
there is a $k$-parameter group of diffeomorphisms
\begin{equation*}
  F_{(t_1, \ldots, t_k)}: M \rightarrow M
\end{equation*}
satisfying the following conditions:
\begin{enumerate}
  \item $F_{(t_1, \ldots, t_k)}$ is a symplectomorphism for all
	$t = (t_1, \ldots, t_k)\in \RR^{k}$, i.e. $F_{t}^*\omega = \omega$.

  \item For all $t, t' \in \RR^k$, $F_t\circ F_t' = F_{t + t'}$.

  \item There exists $c \in \RR_+^k$ with $F_{t + c} = F_t$ for all
	$t \in \RR^k$. The constant $c_j$ is the \emph{period} or
	circumference of the $j$-th component of the torus
	action.
\end{enumerate}

The symplectic torus action encoded by $F$ has $k$ commuting vector
fields denoted $X_1, \ldots, X_k$, constructed by taking derivatives
of $F$:
\begin{equation*}
  X_j(x) = \left. \frac{\partial F_t (x)}{\partial t_j}\right|_{t=0}
\end{equation*}

A symplectic torus action is called \emph{Hamiltonian} if there is, in
addition to the above conditions, a
map $\mu: M \rightarrow \RR^k$ (called the moment map) 
satisfying the duality condition
\begin{equation*}
  \iota_{X_j}\omega = d\mu_j.
\end{equation*}
Note that $\iota_X$ is the contraction operator, taking the $q$-form
$\alpha$ to the $(q-1)$-form such that for any collection of vector
fields $Y_1, \ldots, Y_{q-1}$
\begin{equation*}
  \iota_X\alpha(Y_1, \ldots, Y_{q-1}) = \alpha(X, Y_1, \ldots,
  Y_{q-1}).
\end{equation*}

A key property of the moment map is that the torus action preserves
level sets: $F_t\mu^{-1}(a) \subset \mu^{-1}(a)$ for all $a, t\in
\RR^k$. In addition, in many situations the quotient of a level set
by the torus is still a manifold. We denote the quotient space
\begin{equation*}
  M_a = \mu^{-1}(a) / \Torus^k.
\end{equation*}
In fact, it will be a symplectic
manifold with a canonical symplectic form $\omega_a$ induced from the
original symplectic structure. To be precise, let $q: \mu^{-1}(a)
\rightarrow M_a$ be the quotient map. If $Y_1, Y_2$ are two tangent
vectors on $M_a$ we choose arbitrary lifts $\tilde{Y}_i$ (i.e.
$q_*\tilde{Y}_i = Y_i$), and define
\begin{equation*}
  \omega_a(Y_1, Y_2) = \omega(\tilde{Y}_1, \tilde{Y}_2).
\end{equation*}
One can check that $\omega_a$ is well-defined, closed and
non-degenerate. 

The above construction is called \emph{symplectic reduction}. The
relevance in the present situation is its applications in volume
calculations. Let $D\subset \RR^k$ be the image of the moment map. By
a theorem of Guillemin and Sternberg \cite{MR664117}, $D$ is a convex polytope. One can
define the Duistermaat-Heckman measure on $D$ by considering the
volume form
\begin{equation*}
	\Vol(M_x)dx_1\cdots dx_k.
\end{equation*}
In fact, we have \cite{MR674406} 
\begin{equation}
	\Vol(M) = \int_D \prod c_i \Vol(M_x) dx_1 \cdots dx_k.
	\label{eq:ReductionVolumeFormula}
\end{equation}

In the present context, however, the ribbon graph complex does not admit
a global circle action. To circumvent this difficulty, we construct a locally 
finite cover
$\{U_i\}$ and corresponding partition of unity $\{\phi_i\}$.
We assume that each symplectic manifold $(U_i, \omega)$ has a
Hamiltonian circle action with moment map $\mu_i: U_i \rightarrow
\mathbb{R}$. Note that the following discussion can be trivially
extended to torus actions, but for ease of notation we suppress such
generalities. The key assumption we are making is that
\textbf{$\phi_i$ is equivariant with respect to the circle action on $U_i$}.
Equivalently, $\phi_i$ is constant on the level sets $\mu_i^{-1}(x)$.
Hence there is a function $f_i: \mathbb{R} \rightarrow \mathbb{R}$
with $f_i\circ \mu_i = \phi_i$.

The goal is to calculate the symplectic volume
\begin{equation*}
  \int_M \frac{1}{n!} \omega^n
\end{equation*}
which we first write as a sum of integrals using the partition of
unity:
\begin{equation*}
  \int_M \frac{1}{n!} \omega^n = \sum_i \int_{U_i} \frac{\phi_i}{n!}
  \omega^n.
\end{equation*}

In order to calculate the integral over $U_i$ we utilize the
Hamiltonian circle action. We let $V_i(x) = \mu_i^{-1}(x) / S^1$ be
the symplectic quotient with induced symplectic form $\omega_i(x)$.
Note that at this stage, the partition function $\phi_i$ is not part
of the construction. We let $\Vol_i(x)$ be the volume of the quotient.

Recall the Duistermaat-Heckmann measure on $\mathbb{R}$: If $A\subset
\mathbb{R}$ is any measurable
subset we define
\begin{equation*}
  m_i^{DH}(A) = \int_{\mu_i^{-1}(A)} \frac{\omega^n}{n!}.
\end{equation*}
In particular, we can integrate the function $f_i$ with respect to
this measure to get
\begin{equation*}
  \int_{\mathbb{R}} f_i(x) m_i^{DH}(x) =
  \int_{U_i}\frac{\mu_i^*(f_i)}{n!} \omega^n
  = \int_{U_i} \frac{\phi_i}{n!} \omega^n.
\end{equation*}

To complete the calculation, we relate the Duistermaat-Heckmann
measure to ordinary Lebesque measure with the Radon-Nikodym
derivative. According to Duistermaat and Heckmann this derivative is
equal to $\Vol_i(x)$ times the circumference of the circle action.
Hence
\begin{equation*}
  \int_{U_i} \frac{\phi_i}{n!} \omega^n = \int_{\mu_i(U_i)} f_i(x)
  \theta_i(x) \Vol_i(x)\,dx,
\end{equation*}
where $\theta_i(x)$ is the circumference of the circle action at level
$x$. Note that in the present work, this circumference is equal
to $x$, hence the choice of coordinates is analogous to polar
coordinates, whereas cartesian coordinates would have constant
circumference.
\section{Local Structure}
\label{sect:LocalStructure}
In this section we construct locally defined Hamiltonian torus actions
on the ribbon graph complex. A careful analysis of the domain on which
the group action is defined allows for a partition of unity
subordinate to the open cover induced by the various domains. As a
consequence, one can derive a formula for the volume of the ribbon
graph complex by using the symplectic reduction techniques outlined in
Section~\ref{sect:SymplecticReduction}. The symplectic quotients are
themselves ribbon graph complexes, involving graph types of less
complexity (where the complexity of a graph of type $(g,n)$ is
measured by $2g-2+n$). The result is a recursive formula for
calculating the symplectic volumes. 

Let $\Gamma$ be a trivalent metric ribbon graph. Given an edge $[i]_1$
we define the metric ribbon graph $\Gamma_{\hat{i}}$ obtained by
removing the edge $[i]_1$ from $\Gamma$ and straightening the
resultant 2-valent vertices into contiguous edges, as depicted in
Figure~\ref{fig:EdgeRemoval}. The edge lengths of $\Gamma_{\hat{i}}$ are inherited from $\Gamma$.
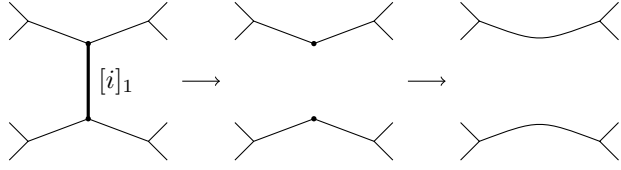
\begin{figure}
\begin{tikzpicture}
  \draw (0,0) -- ++(.8,-0.3) -- +(0.25,0.25)
	+(0,0.) -- +(0.25, -0.25)
	(0,0) -- ++(-0.8, -0.3) -- +(-0.25, 0.25)
  (-0.8,-0.3) -- +(-0.25, -0.25)
  (0,1) -- ++(.8, 0.3) -- + (0.25, 0.25)
  (0.8, 1.3) -- +(0.25, -0.25)
  (0,1) -- ++(-0.8, 0.3) -- + (-0.25, 0.25)
  (-0.8, 1.3) -- +(-0.25, -0.25)
  [->] (1.25, 0.5) -- (1.75, 0.5);
  \fill  (0,0) circle (1pt) (0,1) circle (1pt);
  \draw[very thick] (0,0) -- node[anchor=west] {$[i]_1$} +(0,1);
  \draw [xshift=3cm] (0,0) -- ++(.8,-0.3) -- +(0.25,0.25)
	+(0,0.) -- +(0.25, -0.25)
	(0,0) -- ++(-0.8, -0.3) -- +(-0.25, 0.25)
  (-0.8,-0.3) -- +(-0.25, -0.25)
  (0,1) -- ++(.8, 0.3) -- + (0.25, 0.25)
  (0.8, 1.3) -- +(0.25, -0.25)
  (0,1) -- ++(-0.8, 0.3) -- + (-0.25, 0.25)
  (-0.8, 1.3) -- +(-0.25, -0.25)
  [->] (1.25, 0.5) -- (1.75, 0.5);
  \fill [xshift=3cm] (0,0) circle (1pt) (0,1) circle (1pt);
  \draw [xshift=6cm] (-.8, -0.3) .. controls (0,0) .. (.8,-0.3) -- +(0.25,0.25)
	+(0,0.) -- +(0.25, -0.25)
	(-0.8, -0.3) -- +(-0.25, 0.25)
  (-0.8,-0.3) -- +(-0.25, -0.25)
  (-.8, 1.3) .. controls (0,1) .. (.8, 1.3) -- + (0.25, 0.25)
  (0.8, 1.3) -- +(0.25, -0.25)
  (-0.8, 1.3) -- + (-0.25, 0.25)
  (-0.8, 1.3) -- +(-0.25, -0.25);
\end{tikzpicture}
  \caption{Edge removal from a trivalent ribbon graph.}
\label{fig:EdgeRemoval}
\end{figure}

Note that there is an exception to the above operation in case $[i]_1$
adjoins (or is itself) a loop. $\Gamma_{\hat{i}}$ is defined by
removing the entire lollipop from $\Gamma$, as seen in
Figure~\ref{fig:LollipopRemoval}.
\begin{figure}
  \begin{tikzpicture}
  \draw (0,0) -- ++(.8,-0.3) -- +(0.25,0.25)
	+(0,0.) -- +(0.25, -0.25)
	(0,0) -- ++(-0.8, -0.3) -- +(-0.25, 0.25)
  (-0.8,-0.3) -- +(-0.25, -0.25)
  [->] (1.25, 0.5) -- (1.75, 0.5);
  \fill  (0,0) circle (1pt) +(0, 0.7) circle (1pt);
  \draw[very thick] (0,0) -- node[anchor=west] {$[i]_1$} +(0,0.7)
	(0, 1.2) circle (0.5);
  \draw [xshift=3cm] (-.8, -0.3) .. controls (0,0) .. (.8,-0.3) -- +(0.25,0.25)
	+(0,0.) -- +(0.25, -0.25)
	(-0.8, -0.3) -- +(-0.25, 0.25)
  (-0.8,-0.3) -- +(-0.25, -0.25);
  \end{tikzpicture}
\caption{Removing a lollipop.}
\label{fig:LollipopRemoval}
\end{figure}
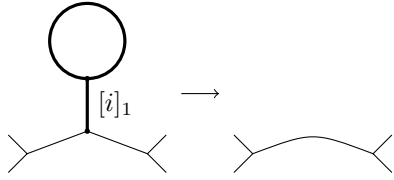

If we remember the locations of the deleted vertices, we have two
marked points on the boundary of $\Gamma_{\hat{i}}$. Let $m(\Gamma, i)$
denote the number of distinct boundary components on which the markings
appear (either 1 or 2). Rotating the marked points can be realized as
an $m$-torus orbit in $\RG_{g,n}$ (if $m=1$ the marked points must be
rotated in sync). We consider the lollipop removal case to also have 1
marked boundary ($m=1$), since rotation on a simple loop is a trivial
action. 
We call these rotations edge-twist deformations, as one
imagines twisting the edge $[i]_1$ around its connections to the
remainder of the graph. The
infinitesimal generators of these deformations are $T_i$ when it is a
circle action and the pair $(\tau_i, \tau_{\gamma_1 i})$ in the case
of a torus action. When edge $[i]_1$ forms a loop, the relevant vector
field is $T_{\gamma_0 i} + T_{\gamma_0^2 i}$ (exactly one of the two
terms is nonzero).

The set of all ribbon graphs obtained during one complete rotation of
edge $[i]_1$ is called the torus orbit of $(\Gamma, i)$, and denoted
$\Orbit(\Gamma, i)$. For any $\Gamma \in \RG_{g,n}^3(L_{\nset{n}})$
we consider the set
$$
 \TorusCover(\Gamma, i) = 
 \bigcup_{\tilde{\Gamma} \in \Met(\FiniteCount{\Gamma}; L_{\nset{n}})} 
 \Orbit(\tilde\Gamma, i) ,
$$
where $\Met(\FiniteCount{\Gamma}; L_{\nset{n}}) =
\RG_{g,n}(L_{\nset{n}}) \cap \Met(\FiniteCount{\Gamma})$. Note that
$\TorusCover(\Gamma, i) \subset \RG_{g,n}(L_{\nset{n}})$.

If we restrict attention to edges which are adjacent to the
first boundary (boundary label $1$) we still obtain a cover of the trivalent strata:
$$
\RG^3_{g,n}(L_{\nset{n}}) \subset \bigcup_{\Gamma \in
\RG^3_{g,n}(L_{\nset{n}})} \bigcup_{i : b(i) = 1} \TorusCover(\Gamma, i).
$$
In addition, each subset $\TorusCover(\Gamma, i)$ has a well-defined
function $f_{\Gamma, i}(\tilde{\Gamma})= \ell(\tilde{\Gamma}, i)$.

\begin{lemma}
 The collection of functions $\{ \frac{1}{L_1} f_{\Gamma, i}  \}$
 forms a partition of unity subordinate to the cover $\{
 \TorusCover(\Gamma, i)\, | \, \Gamma \in \RG^3_{g,n}(L_{\nset{n}}), b_{\Gamma}(i) = 1 \}$.
\end{lemma}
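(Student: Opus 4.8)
The plan is to verify the three defining properties of a partition of unity subordinate to the cover: each function is non-negative, each is supported inside its cover element, and the functions sum to $1$ at every point of the trivalent stratum. Since the cover is used only to localize the volume integral over $\RG^3_{g,n}(L_{\nset{n}})$ (the complement being a lower-dimensional, measure-zero union of non-trivalent faces), it suffices to work pointwise on the top stratum, and continuity across cell walls is not required. Local finiteness is immediate: there are only finitely many trivalent combinatorial types of fixed genus and boundary number, and $\TorusCover(\Gamma, i)$ depends only on $\FiniteCount{\Gamma}$ and $i$, so the cover has finitely many distinct members.

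Non-negativity and subordination are essentially built in: on $\TorusCover(\Gamma, i)$ the function $f_{\Gamma, i}(\tilde\Gamma) = \ell(\tilde\Gamma, i)$ is a positive edge length, so $\tfrac{1}{L_1} f_{\Gamma, i} > 0$ there, and we extend it by $0$ off the cover element. I would also record here the invariance that makes each $f_{\Gamma,i}$ constant along the twist orbits (hence equivariant, as needed for the symplectic reduction in Section~\ref{sect:SymplecticReduction}): away from the loop case, $\tau_i$ and $\tau_{\gamma_1 i}$ only involve the partials $\partial/\partial\ell_{\gamma_0^k i}$ and $\partial/\partial\ell_{\gamma_0^k\gamma_1 i}$ with $k \geq 1$, so neither touches $\ell_i$ itself and $T_i \ell_i = \tau_i \ell_i = \tau_{\gamma_1 i}\ell_i = 0$. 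Thus $f_{\Gamma,i}$ is well defined on all of $\TorusCover(\Gamma,i)$ and constant on each $\Orbit(\tilde\Gamma, i)$.

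The heart of the argument is the pointwise identity $\sum_{(\Gamma,i)} \tfrac{1}{L_1} f_{\Gamma,i}(\tilde\Gamma) = 1$ for a fixed $\tilde\Gamma \in \RG^3_{g,n}(L_{\nset{n}})$. I would reduce this to the perimeter identity $L_1 = \sum_{j:\, b(j)=1} \ell(\tilde\Gamma, j)$, which is just the definition of the boundary-$1$ perimeter as the sum of the lengths of the half-edges lying on it (an edge with both half-edges on boundary $1$ contributing twice). The combinatorial content is then to establish a bijection between the index pairs $(\Gamma, i)$ for which $\tilde\Gamma \in \TorusCover(\Gamma, i)$ and the half-edges $j$ of $\tilde\Gamma$ with $b(j) = 1$, under which $f_{\Gamma, i}(\tilde\Gamma) = \ell(\tilde\Gamma, j)$. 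Existence is easy --- taking $\Gamma = \tilde\Gamma$ and $i = j$ places $\tilde\Gamma$ in $\TorusCover(\tilde\Gamma, j)$ via the identity twist, with value $\ell_j$ --- so the work is to see that every cover member through $\tilde\Gamma$ is realized this way and that the correspondence is one-to-one after the right identification of the twisted edge $i$ with a half-edge of $\tilde\Gamma$.

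The main obstacle is precisely this counting, and it hinges on understanding how a single twist orbit meets the trivalent cells. As one rotates edge $[i]_1$ once around boundary $1$, a sliding vertex sweeps past the vertices of that boundary, each crossing being a Whitehead move into a new combinatorial type; one must check that the resulting cyclic sequence of types is determined by the combinatorial data $(\FiniteCount{\Gamma}, i)$ alone, independent of the metric, so that $\TorusCover(\Gamma, i)$ is genuinely the same subset for every base type along the orbit, and that tracking the twisted edge through these moves matches it to a well-defined half-edge of $\tilde\Gamma$ on boundary $1$ --- counted once per half-edge, so that an edge appearing twice on boundary $1$ contributes twice, in exact agreement with the perimeter identity. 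The degenerate lollipop/loop case ($m = 1$, where rotation on a simple loop is trivial and both marked points lie on boundary $1$) must be handled separately but follows the same bookkeeping. Combining the bijection with the perimeter identity then gives $\sum \tfrac{1}{L_1} f_{\Gamma,i}(\tilde\Gamma) = \tfrac{1}{L_1}\sum_{b(j)=1}\ell_j = 1$, completing the proof.
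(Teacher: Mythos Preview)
Your proposal is correct and rests on the same observation as the paper: the paper's entire proof is the single sentence ``This follows from the observation that the sum of edge lengths around the first boundary equals, by definition, $L_1$,'' which is exactly your perimeter identity $L_1 = \sum_{j:\,b(j)=1}\ell(\tilde\Gamma,j)$. What you add --- the equivariance check $T_i\ell_i = 0$, the bijection between cover members through $\tilde\Gamma$ and boundary-$1$ half-edges via tracking the twisted edge through Whitehead moves, and the separate handling of the loop case --- are details the paper leaves implicit, not a different route.
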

\begin{proof}
 This follows from the observation that the sum of edge lengths around
the first boundary equals, by definition, $L_1$. 
\end{proof}

We note that, by construction, each $\TorusCover(\Gamma, i)$ has a
globally defined torus action. The dimension of the torus is either 1
or 2, depending on the configuration of the vertices incident to edge
$[i]_1$, as discussed earlier.
\begin{lemma}
 The torus action on $\TorusCover(\Gamma, i)$ is Hamiltonian, with
moment map given by the period(s) of the action.
\end{lemma}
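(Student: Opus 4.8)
The plan is to produce the moment map explicitly and then identify it with the period of the action. The torus is generated by the commuting fields $\tau_i$ and $\tau_{\gamma_1 i}$ (or by their sum $T_i$ when the action is one-dimensional), and since $\Omega$ is closed, Cartan's formula gives $\mathcal{L}_{\tau_i}\Omega = d(\iota_{\tau_i}\Omega)$. Thus the flow is symplectic precisely when $\iota_{\tau_i}\Omega$ is closed and Hamiltonian precisely when it is exact, so the whole statement reduces to computing the contraction $\iota_{\tau_i}\Omega$ restricted to the level set $\RG_{g,n}(L_{\nset{n}})$ and exhibiting a primitive.

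First I would pin down the candidate moment map. Removing the edge $[i]_1$ and straightening produces $\Gamma_{\hat{i}}$ together with two marked points, each lying on a boundary component of $\Gamma_{\hat{i}}$, and rotating a marked point sweeps out that component. I claim the $j$-th component of the moment map is the perimeter of the $j$-th boundary of $\Gamma_{\hat{i}}$ carrying a marking, which is exactly the length traversed in one full rotation, i.e. the period of that circle factor. When $m(\Gamma,i)=1$ the two markings lie on a single merged boundary, whose perimeter equals the sum of the perimeters of the two boundaries of $\Gamma$ adjacent to $[i]_1$ minus $2\ell_i$, and the synchronised rotation has this common period; when $m(\Gamma,i)=2$ the boundary $[i]_2=[\gamma_1 i]_2$ splits into two, whose perimeters are the two independent periods. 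One checks these values match the integration ranges of the recursion; e.g. in the merged case the period runs over $(\abs{L_1-L_j},\,L_1+L_j)$ as $\ell_i$ runs over $(0,\min(L_1,L_j))$.

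Next comes the core computation: evaluate $\iota_{\tau_i}\Omega=\tfrac12\sum_k \iota_{\tau_i}\omega_k$. Using $\iota_V(d\ell_a\wedge d\ell_b)=v_a\,d\ell_b - v_b\,d\ell_a$, the coefficient of $d\ell_c$ in $\iota_V\omega_k$ is $\sum_{a\prec c}v_a - \sum_{c\prec b}v_b$ for a chosen linear order on the edges of boundary $k$. The alternating signs $(-1)^j$ built into $\tau_i=\sum_j(-1)^j\partial_{\ell_{\gamma_0^j i}}$ are engineered precisely so that these partial sums telescope: contributions of edges lying strictly between the marked arcs cancel in consecutive pairs, and what survives assembles into $d$ of the swept perimeter plus a multiple of $dp_k$. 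Since $dp_k=0$ on $\RG_{g,n}(L_{\nset{n}})$, the restriction is exactly $d\mu_i$; carrying this out for $\tau_i$ and $\tau_{\gamma_1 i}$ separately yields the two components, and their sum handles the one-dimensional case.

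The hard part will be the combinatorial bookkeeping, for two reasons. First, a single edge can lie on two different boundaries (and possibly twice on one), so each of the four edges whose lengths change under $T_i$ contributes to the contraction of \emph{every} boundary it meets; one must track all of these, not merely the local picture around $[i]_1$, and verify the global cancellation. Second, the flow is only piecewise-linear in these coordinates: as a marking slides, an adjacent edge length decreases to $0$ and the graph crosses into an adjacent cell of $\RG_{g,n}$, where the combinatorial type and hence the list of boundary edges change. I would verify that $\mu_i$ and $\iota_{\tau_i}\Omega$ extend continuously across these walls, so that the identity $\iota_{\tau_i}\Omega=d\mu_i$ holds on all of $\TorusCover(\Gamma,i)$ rather than on a single open cell, and I would treat the loop and lollipop degenerations — where one of $\tau_i,\tau_{\gamma_1 i}$ vanishes and the active generator is $T_{\gamma_0 i}+T_{\gamma_0^2 i}$ — as a separate case. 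Granting the Hamiltonian identity, preservation of level sets and convexity of the moment image follow from the general theory already recalled.
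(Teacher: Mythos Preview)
Your proposal is correct and follows essentially the same route as the paper: both reduce the claim to computing $\iota_{\tau_i}\Omega$ (respectively $\iota_{T_i}\Omega$) directly from the definition of $\omega_k$ and showing that, after restriction to $\RG_{g,n}(L_{\nset{n}})$, the result is $d$ of the perimeter of the newly created boundary component. The paper simply carries out this contraction explicitly on the trivalent stratum, labeling the four edges adjacent to $[i]_1$ and the (up to four) boundaries they lie on, and verifies the cancellations you predicted; your anticipated subtleties about cross-cell continuity and the lollipop case are real but are not treated separately in the paper's argument.
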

\begin{proof}
We must calculate the contraction of $\Omega$ by the vector
fields $\tau_i$ and $\tau_{\gamma_1 i}$ in the torus action case
and the vector field $T_i$ in the circle action case. Beginning
with the case of the circle action, refer to
Figure~\ref{fig:contractionLabels}
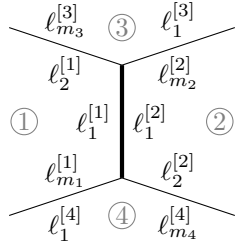
\begin{figure}
  \begin{tikzpicture}
	\draw 
	  (0,0) -- node[below] {$\ell_{m_4}^{[4]}$} 
	  node[above] {$\ell_{2}^{[2]}$} +(1.50, -0.5)
	  (0,0) -- node[above] {$\ell_{m_1}^{[1]}$}
	  node[below] {$\ell_{1}^{[4]}$} +(-1.5, -0.5);
	\draw[ultra thick]
	  (0,0) -- node[left] {$\ell_{1}^{[1]}$}
	  node[right] {$\ell_{1}^{[2]}$} (0,1.5);
	\draw
	  (0,1.5) -- node[above] {$\ell_{1}^{[3]}$} 
	  node[below] {$\ell_{m_2}^{[2]}$} +(1.5, 0.5)
	  +(0,0) -- node[above] {$\ell_{m_3}^{[3]}$}
	  node[below] {$\ell_{2}^{[1]}$} +(-1.5, 0.5);
	\draw[gray]
	  (-1.3, 0.75) node {1} circle (0.5em) 
	  (1.3, 0.75) node {2} circle (0.5em)
	  (0, -0.5) node {4} circle (0.5em)
	  (0, 2.0) node {3} circle (0.5em);
  \end{tikzpicture}
\caption{Edge labels used to calculate vector field contraction. Note
that edge $[i]_1$ is in bold.}
\label{fig:contractionLabels}
\end{figure}
for the notation used in what follows.

Note that the only terms in $\Omega$ which contribute are $\omega_i$
for $i=1, 2, 3, 4$. Without loss of generality, we may assume that edge
$[i]_1$ corresponds with $\ell_1^{[1]}$ and $\ell_1^{[2]}$, while edge
$[\gamma_0\gamma_1 i]_1$ corresponds with $\ell_1^{[3]}$ and
$[\gamma_0 i]_1$
corresponds with $\ell_1^{[4]}$. Under this labeling we have
\begin{align*}
T_i &= -\frac{\partial}{\partial \ell_{m_2}^{[2]}} + \frac{\partial}{\partial\ell_{2}^{[1]}}
	- \frac{\partial}{\partial\ell_{m_1}^{[1]}} + \frac{\partial}{\partial\ell_{2}^{[2]}} \\
	&= -\frac{\partial}{\partial\ell_{1}^{[3]}} + \frac{\partial}{\partial\ell_{m_3}^{[3]}}
	- \frac{\partial}{\partial\ell_{1}^{[4]}} + \frac{\partial}{\partial\ell_{m_4}^{[4]}}.
\end{align*}
 It is straightforward to calculate
\begin{align*}
\iota_{T_i}\omega_1 &= -d\ell_{1}^{[1]} + (d\ell_{3}^{[1]} + \cdots + d\ell_{m_1}^{[1]})
	+ (d\ell_{1}^{[1]} + \cdots + d\ell_{m_1 - 1}^{[1]}  ) \\
	&= 2dp_1 - 2d\ell_{1}^{[1]} - d\ell_{2}^{[1]} - d\ell_{m_1}^{[1]} \\
\iota_{T_i}\omega_2 &= 2dp_2 - 2d\ell_{1}^{[2]} - d\ell_{2}^{[2]} - d\ell_{m_2}^{[2]} \\
\iota_{T_i}\omega_3 &= -(d\ell_{2}^{[3]} + \cdots + d\ell_{m_3}^{[3]} ) 
	+ (-d\ell_{1}^{[3]} - \dots - d\ell_{m_3 - 1}^{[3]}) \\
	&= -2dp_3 + d\ell_{1}^{[3]} + d\ell_{m_3}^{[3]} \\
\iota_{T_i}\omega_4 &= -2dp_4 + d\ell_1^{[4]} + d\ell_{m_4}^{[4]}.
\end{align*}
Note that, although slightly more complicated, nothing fundamentally
changes in the above calculation if some of the boundaries happen to
agree.

Since $\ell_{2}^{[1]} = \ell_{m_3}^{[3]}$, $\ell_{m_1}^{[1]} =
\ell_{1}^{[4]}$, $\ell_2^{[2]}=\ell_{m_4}^{[4]}$, $\ell_{m_2}^{[2]} =
\ell_{1}^{[3]}$, and $\ell_{1}^{[1]} = \ell_i = \ell_{1}^{[2]}$ (being
different labels for the same edges) we have
\begin{equation*}
\iota_{T_i}\Omega = d(p_1 + p_2 - 2\ell_i) - dp_3 - dp_4.
\end{equation*}
When restricted to $\RG_{g,n}(L_{\nset{n}})$ we have $dp_k = 0$, and
observing that $p_1 + p_2 - 2\ell_i$ is the length of the circle around
which edge $i$ rotates completes the first part of the proof.

The torus action case occurs when edge $i$ has the same boundary on
either side. Refer to Figure~\ref{fig:torusContractionLabels}
\begin{figure}
  \begin{tikzpicture}
	\draw (0,0) +(0:1.5) -- 
	  node[above=1pt, right=1pt] {$\ell_{k-1}^{[1]}$}
	  node[below=6pt, left=-1pt] {$\ell_{1}^{[3]}$}
	  +(72:1.5) -- +(144:1.5)
	  -- +(216:1.5) -- +(288:1.5) -- 
	  node[below, right] {$\ell_{2}^{[1]}$} 
	  node[above=6pt, left=-1pt] {$\ell_{m_3}^{[3]}$} 
	  (1.5,0)
	  (4.5,0) +(36:1.5) -- +(108:1.5) --
	  node[above=6pt, left=-1pt] {$\ell_{k+1}^{[1]}$}
	  node[below=6pt, right=-1pt] {$\ell_{m_2}^{[2]}$}
	  +(180:1.5) --
	  node[below, left] {$\ell_{m_1}^{[1]}$}
	  node[above=6pt, right=-1pt] {$\ell_{1}^{[2]}$}
	  +(252:1.5) -- +(324:1.5) -- cycle;
	\draw[ultra thick] (1.5, 0) --
	  node[above] {$\ell_{k}^{[1]}$}
	  node[below] {$\ell_{1}^{[1]}$}
	  (3,0);
	\draw[gray] (0,0) +(72:1.5) -- +(72:0.8)
	  +(144:1.5) -- +(144:0.8) +(216:1.5) -- +(216:0.8)
	  +(288:1.5) -- +(288:0.8)
	  (4.5,0) +(36:1.5) -- +(36:0.8)  +(108:1.5) -- +(108:0.8)
	  +(252:1.5) -- +(252:0.8) +(324:1.5) -- +(324:0.8);
	\draw[gray] 
	(0,0) node {3} circle (0.5em)
	(4.5, 0) node {2} circle (0.5em)
	(2.25, 1.25) node {1} circle (0.5em)
	(2.25, -1.25) node {1} circle (0.5em);
  \end{tikzpicture}
\caption{Edge labels used to calculate vector field contraction. The
edge in bold is $[i]_1$.}
\label{fig:torusContractionLabels}
\end{figure}
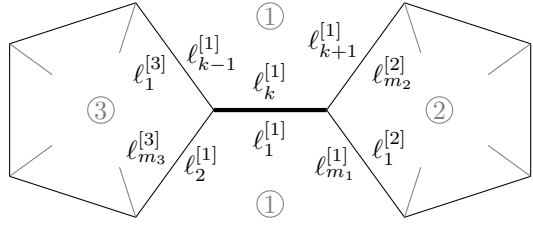
for the notation used in what follows.

When traversing boundary 1, we assume that $\ell_i = \ell_{1}^{[1]}$,
and note that edge $i$ divides the perimeter into two distinct regions
(which become the two distinct circles for the torus action). We label
$\ell_{k}^{[1]}$ as the second occurrence of $\ell_i$ in the
perimeter, which makes the two regions labeled by $\ell_{2}^{[1]}, 
\ldots, \ell_{k-1}^{[1]}$ and $\ell_{k+1}^{[1]}, \ldots, \ell_{m_1}^{[1]}$. 

The vector fields under this labeling are given by
\begin{align*}
  \tau_i &= \frac{\partial}{\partial \ell_{2}^{[1]}} -
		\frac{\partial}{\partial\ell_{k-1}^{[1]}} \\
	  &= \frac{\partial}{\partial\ell_{m_3}^{[3]}} 
		- \frac{\partial}{\partial\ell_{1}^{[3]}} \\
  \tau_{\gamma_1 i} &=
	 \frac{\partial}{\partial \ell_{k+1}^{[1]}} 
	  - \frac{\partial}{\partial\ell_{m_1}^{[1]}} \\
	&= \frac{\partial}{\partial\ell_{m_2}^{[2]}} 
	  - \frac{\partial}{\partial\ell_{1}^{[2]}}, 
\end{align*}
from which we calculate
\begin{align*}
\iota_{\tau_i}\omega_1 &= -d\ell_{1}^{[1]} + d\ell_{3}^{[1]} + \cdots + d\ell_{m_1}^{[1]}
 + d\ell_{1}^{[1]} + \cdots + d\ell_{k-2}^{[1]} \\
 &\quad - (d\ell_{k}^{[1]} + \cdots + d\ell_{m_1}^{[1]}) \\
 	&= 2(d\ell_{2}^{[1]} + \cdots + d\ell_{k-1}^{[1]}) 
	- d\ell_{2}^{[1]} - d\ell_{k-1}^{[1]} \\
\iota_{\tau_i}\omega_3 &= -(d\ell_{2}^{[3]} + \cdots + d\ell_{m_3}^{[3]})
	- (d\ell_{1}^{[3]} + \cdots + d\ell_{m_3 - 1}^{[3]}) \\
	&= -2dp_3 + d\ell_{1}^{[3]} + d\ell_{m_3}^{[3]} \\
\iota_{\tau_{\gamma_1 i}}\omega_1 &= 2(d\ell_{k+1}^{[1]} + \cdots + d\ell_{m_1}^{[1]}) 
	- d\ell_{k+1}^{[1]} - d\ell_{m_1}^{[1]} \\
\iota_{\tau_{\gamma_1 i}}\omega_2 &=
-2dp_2 + d\ell_{1}^{[2]} + d\ell_{m_2}^{[2]}  .
\end{align*}
By canceling same-edge terms we have
\begin{align*}
\iota_{\tau_i}\Omega &= d(\ell_{2}^{[1]} + \cdots + \ell_{k-1}^{[1]}  ) - dp_3 \\
\iota_{\tau_{\gamma_1 i}} \Omega &= d(\ell_{k+1}^{[1]} + \cdots + \ell_{m_1}^{[1]})
	- dp_2.
\end{align*}
Ignoring the inconsequential $dp_k$ terms, we observe that
$\ell_{2}^{[1]} + \cdots + \ell_{k-1}^{[1]}$ is the period of the first
circle action, while $\ell_{k+1}^{[1]} + \cdots + \ell_{m_1}^{[1]}$ is
the period of the second, thus completing the proof of the lemma.
\end{proof}

A corollary of the above proof is that $\Omega$ restricted to
$\RG^{3}_{g,n}(L)$ is non-degenerate. In fact, the vector fields $T_i$
span the tangent space $T\RG^{3}_{g,n}(L)$, and the duality relation
$\iota_{T_i}\Omega = -2d\ell_i$ completely characterizes $\Omega$.

Note that an alternative description of the moment map is the perimeter
map for the newly created boundary (or boundaries) obtained by removing
edge $[i]_1$. Hence, the symplectic quotients are identified with
subsets of
ribbon graph complexes obtained by edge removal. To be precise, the
symplectic quotient is a subset of $\RG_{g',n'}$, where $(g', n')$ is
the type of the graph $\Gamma_{\hat{i}}$. In case removing $i$
disconnects $\Gamma$ into two graphs of type $(g_1,n_1)$ and
$(g_2, n_2)$, then the quotient will be a subset of $\RG_{g_1, n_1}
\times \RG_{g_2, n_2}$. Moreover, the perimeters of the newly created
graphs are fixed by the original perimeters of $\Gamma$ and the
particular level of the moment map taken for the quotient. 
A more precise determination of these perimeters and the types of
graphs which appear for the quotient is deferred to
Section~\ref{sect:RecursionFormula}. 

A consequence of the geometry of the quotients is that they have two
independent symplectic structures: $\overline{\Omega}$ coming from
symplectic reduction, and $\Omega$ induced from the Kontsevich
symplectic form on $\RG_{g',n'}$. Although they are defined
differently, the two symplectic structures agree:

\begin{lemma} $\bar{\Omega} = \Omega$. \end{lemma}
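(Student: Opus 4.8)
The plan is to compare the two symplectic forms on the symplectic quotient $V_i(x) = \mu_i^{-1}(x)/S^1$ (and analogously for the torus case) by working on a single cell $\Met(\FiniteCount{\Gamma_{\hat{i}}})$ at a time, since both $\bar\Omega$ and $\Omega$ are defined cell-by-cell and it suffices to match them on the top-dimensional strata. First I would set up explicit coordinates: the edges of $\Gamma_{\hat{i}}$ are precisely the edges of $\Gamma$ other than $[i]_1$, subject to the identifications produced by straightening the two created $2$-valent vertices (the two half-edges flanking each deleted vertex merge into a single edge whose length is the sum). These merged lengths, together with the surviving unmerged edges, give coordinates on the quotient, and I would express both forms in these coordinates.

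For $\Omega$ (the induced Kontsevich form on $\RG_{g',n'}$) I would use the characterization established in the corollary just above this lemma: $\Omega$ is completely determined by the duality relation $\iota_{T_e}\Omega = -2\,d\ell_e$ for each edge $e$, where $T_e$ are the edge-twist vector fields that span the tangent space to $\RG^3_{g',n'}(L')$. So the cleanest route is to show that $\bar\Omega$ satisfies the same duality relations. To do this I would take the quotient-level edge-twist fields $\bar T_e$ on $V_i(x)$, lift them to $\mu_i^{-1}(x)$, and compute $\bar\Omega(\bar T_e, \bar T_f) = \Omega(\widetilde{T_e}, \widetilde{T_f})$ using the definition of the reduced form from Section~\ref{sect:SymplecticReduction}. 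The key point is that an edge-twist field $T_e$ on the larger graph $\Gamma$ descends, under edge removal and the quotient by the circle (or torus) orbit, to the corresponding edge-twist field on $\Gamma_{\hat{i}}$; the only subtlety is the merged edges, whose twist field is the sum of the two constituent twist fields, matching the fact that their lengths add. Checking that $\iota_{\bar T_e}\bar\Omega = -2\,d\ell_e$ then reduces to the same local contraction bookkeeping already carried out in the proof of the Hamiltonian lemma, with the moment-map directions $\tau_i,\tau_{\gamma_1 i}$ (or $T_i$) projected out.

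The main obstacle I expect is verifying that the twist vector fields are genuinely compatible with the two quotient operations simultaneously — that is, that lifting a quotient edge-twist, contracting with $\Omega$, and then restricting to the kernel of $d\mu_i$ produces exactly $-2\,d\ell_e$ with no stray $dp_k$ or $d\ell_i$ terms. This is precisely where the removal of edge $[i]_1$ changes which boundaries the surviving edges lie on, and one must confirm that the $\omega_k$ terms reassemble correctly after the two boundaries adjacent to $[i]_1$ are either merged (the $m=1$ case) or reorganized (the $m=2$, torus case). Concretely, I would treat the circle and torus cases separately, reusing Figures~\ref{fig:contractionLabels} and~\ref{fig:torusContractionLabels}: in each case the contractions $\iota_{T_i}\Omega$ and $\iota_{\tau_i}\Omega,\ \iota_{\tau_{\gamma_1 i}}\Omega$ already computed show that quotienting by these directions annihilates exactly the moment-map coordinate(s), leaving a form on the remaining edges.

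Once the duality relations $\iota_{\bar T_e}\bar\Omega = -2\,d\ell_e$ are established on each cell of $V_i(x)$, the corollary's uniqueness statement forces $\bar\Omega = \Omega$ on the top strata, and since both forms extend continuously across the lower-dimensional cells and those contribute measure zero, the identity holds throughout the quotient. I would close by remarking that the disconnected case (where removing $[i]_1$ splits $\Gamma$ into two graphs) is identical, with $\Omega$ on the product $\RG_{g_1,n_1}\times\RG_{g_2,n_2}$ determined edge-by-edge in the same way, so no separate argument is needed there.
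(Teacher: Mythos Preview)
Your proposal is correct and follows essentially the same approach as the paper: both arguments characterize $\Omega$ via the duality $\iota_{T_e}\Omega = -2\,d\ell_e$, lift the quotient edge-twist fields (noting that edges not incident to $[i]_1$ satisfy $q_*T_j = T_j$ and merged edges have $q_*(T_{i_1}+T_{i_2}) = T_{k_1}$), and verify the same duality for $\bar\Omega$ by direct contraction. The paper's proof is shorter because it does not split into circle versus torus cases nor worry about stray $dp_k$ terms; since the upstairs duality $\iota_{T_j}\Omega = -2\,d\ell_j$ is already established for \emph{all} $j$, the merged-edge contraction $\Omega(T_{i_1}+T_{i_2},\cdot) = -2(d\ell_{i_1}+d\ell_{i_2}) = -2\,d\ell_{k_1}$ is immediate and uniform across both cases, so your anticipated obstacle does not arise.
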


\begin{proof} 
  Recall that $\Omega$ is identified by the duality relation
  $\Omega(T_i, \cdot) = -2d\ell_i$, while $\bar{\Omega}$ is calculated
  by lifting vectors to the torus orbit. We denote the quotient map by
  $q: \TorusCover(\Gamma, i) \rightarrow \RG_{g',n'}(L')$, where the
  exact type and boundaries of the ribbon graph complex in the image
  is one of the possibilities discussed above. The torus quotient is a
  local operation, and any edge $j$ not incident to $i$ has $q_*T_j =
  T_j$, so it remains to find lifts of edges labeled $k_1$ and $k_2$.
  in Figure~\ref{fig:QuotientForm}
  \begin{figure}
	\begin{center}
	  \begin{tikzpicture}
		\draw (0,0) --node[anchor=north] {$\ell_{i_4}$} ++(.8,-0.3) -- +(0.25,0.25)
	+(0,0.) -- +(0.25, -0.25)
	(0,0) -- node[anchor=north] {$\ell_{i_3}$} ++(-0.8, -0.3) -- +(-0.25, 0.25)
  (-0.8,-0.3) -- +(-0.25, -0.25)
  (0,1) -- node[anchor=south] {$\ell_{i_1}$} ++(.8, 0.3) -- + (0.25, 0.25)
  (0.8, 1.3) -- +(0.25, -0.25)
  (0,1) -- node[anchor=south] {$\ell_{i_2}$} ++(-0.8, 0.3) -- + (-0.25, 0.25)
  (-0.8, 1.3) -- +(-0.25, -0.25)
  [->] (1.25, 0.5) -- node[anchor=north] {$q$} (2.25, 0.5);
  \fill  (0,0) circle (1pt) (0,1) circle (1pt);
  \draw[very thick] (0,0) -- node[anchor=west] {$\ell_i$} +(0,1);
  \draw [xshift=3.5cm] (-.8, -0.3) .. node[anchor=north] {$\ell_{k_2}$}
  controls (0,0) .. (.8,-0.3)
   -- +(0.25,0.25)
	+(0,0.) -- +(0.25, -0.25)
	(-0.8, -0.3) -- +(-0.25, 0.25)
  (-0.8,-0.3) -- +(-0.25, -0.25)
  (-.8, 1.3) .. node[anchor=south] {$\ell_{k_1}$} controls (0,1) .. (.8, 1.3) -- + (0.25, 0.25)
  (0.8, 1.3) -- +(0.25, -0.25)
  (-0.8, 1.3) -- + (-0.25, 0.25)
  (-0.8, 1.3) -- +(-0.25, -0.25);
	  \end{tikzpicture}
	\end{center}
	\caption{Edge notations used to calculate the quotient symplectic
	form.}
	\label{fig:QuotientForm}
  \end{figure}
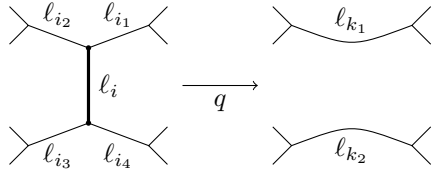
  
  However, it is clear that 
  \begin{align*}
	q_*(T_{i_1} + T_{i_2}) &= T_{k_1} \\
	q_*(T_{i_3} + T_{i_4}) &= T_{k_2},
  \end{align*}
  while
  \begin{align*}
	\Omega(T_{i_1} + T_{i_2}, \cdot) &= -2(d\ell_{i_1} +
	d\ell_{i_2}) \\
	&= -2d\ell_{k_1} \\
	\Omega(T_{i_3} + T_{i_4}, \cdot) &= -2(d\ell_{i_3} +
	d\ell_{i_4}) \\
	&= -2d\ell_{k_2}.
  \end{align*}
  This completes the proof of the lemma.
\end{proof}


\section{Recursion formula}
\label{sect:RecursionFormula}

As constructed in the previous section, we have a partition of unity subordinate to the open cover 
$$\{ \TorusCover(\Gamma, i) \, | \, \Gamma \in  \RG^3_{g,n}(L),\ b(i) = 1\}.$$
Hence we wish to calculate the partition-scaled volume of each $\TorusCover(\Gamma, i)$. Rather than calculate each individually, we will group the covers together according to the type and boundary labelings of the edge-deleted graph $\Gamma_{\hat{i}}$. In particular, the different types that arise are:

\begin{enumerate} 
	
  \item
Edge $i$ bounds perimeters $1$ and $j$ for some $j\neq 1$. In this case,
removing edge $i$ is the same as removing a $\theta$-graph with boundary
lengths $(L_1, L_j, x)$, leaving $\Gamma_{\hat{i}} \in \RG_{g, n-1}(x,
L_{\nset{n} \setminus \{ 1, j \}})$ where $\abs{L_1 - L_j} < x < L_1 +
L_j$. The length of the edge being removed (value of the partition of
unity) is calculated to be
\begin{equation*}
\ell_i = \frac{1}{2}(L_1 + L_j - x  )
\end{equation*}

\item
  Edge $i$ is part of a lollipop, with boundaries $1$ and $j$ on either
side (again, $1 \neq j$). We have $\Gamma_{\hat{i}}\in \RG_{g, n-1}(x,
L_{\nset{n} \setminus \{ 1, j \}})$ where $0 \leq x \leq \abs{L_1
- L_j}$. The total length of the lollipop (sum of all edges which
contribute to this term) is
\begin{equation*}
\begin{cases}
L_1 & \text{if $L_1 < L_j$} \\
L_1  - x & \text{if $L_1 > L_j$}
\end{cases}
\end{equation*}

\item Edge $i$ has boundary 1 on both sides, neither vertex has a
loop, and removing edge $i$ does not disconnect the graph. Then
$\Gamma_{\hat{i}}\in \RG_{g-1, n+1}(x, y, L_{\nset{n} \setminus \{
1,\}})$, where $0 < x+y < L_1$. The length of edge $i$ is calculated to
be $$ \ell_i = \frac{1}{2}(L_1 - x - y). $$

\item
  Edge $i$ has boundary 1 on both sides, neither vertex
is incident to a loop, and removing edge $i$ disconnects the graph. Then
$\Gamma_{\hat{i}}\in \RG_{g_1, n_1}(x, L_{\mathcal{I}}) \times
\RG_{g_2, n_2}(y, L_{\mathcal{J}})$, where $g_1 + g_2 = g$,
$\mathcal{I} \sqcup \mathcal{J} = \nset{n} \setminus \{ 1\}$, $n_1 =
\FiniteCount{\mathcal{I}} + 1$ and $n_2 = \FiniteCount{\mathcal{J}} +1$.
The newly created boundaries satisfy $0 < x+y < L_1$ and the length of
the removed edge is 
\begin{equation*} 
  \ell_i = \frac{1}{2}(L_1 - x- y). 
\end{equation*}
We note for future reference that if $g_1 = g_2$ and
$\FiniteCount{\mathcal{I}} = 0 = \FiniteCount{\mathcal{J}}$ then there
is a symmetry of order 2 obtained by exchanging $x$ and $y$.
\end{enumerate}

To be clear, there are multiple groupings coming from each of the types
listed above. For example, pairs $(\Gamma, i)$ satisfying type (1) with
$j=2$ are in a different group than pairs satisfying type (1) with
$j=3$. Only type (3) describes a single group.

What makes the integration scheme work is the fact that the symplectic
quotients $\TorusCover(\Gamma, i) \symplecticQuotient \Torus$ of a fixed
type form a disjoint cover of the appropriate ribbon graph complex.
In other words, the combined reduced volumes of a given type coincides
with the volume of the ribbon graph complex of the specified type. This
is most easily seen by working backwards. For example, starting with a
graph $\Gamma \in \RG_{g, n-1}(x, L_{\nset{n} \setminus \{1, j \}})$,
and a point on the boundary of length $x$, there is a unique way to
recover a graph in $\RG_{g,n}(L_{\nset{n}})$: if $x < \abs{L_1 - L_j}$
then one must attach a lollipop to the marked point. If $x > \abs{L_1
- L_j}$ then one must attach a theta graph to the marked point (one of
the two vertices of the theta graph must be distinguished in order to
perform this operation unambiguously). The other cases are similar.

To calculate the volume of $\RG_{g,n}(L_{\nset{n}})$, we use
the partition of unity to write the volume as a sum over
torus covers $\TorusCover(\Gamma, i)$. We group the covers by
type and use the symplectic volume equation \eqref{eq:ReductionVolumeFormula}
 to calculate the contribution
from each grouping. The result is that
\begin{equation}
  \begin{split}
L_1 & \Vol_{g,n}(L_1,  \ldots, L_n) \\
 &  = \sum_{j=2}^n
	\int_{\abs{L_1-L_j}}^{L_1 + L_j}dx\, \frac{x}{2} (L_1 + L_j - x ) 
	\Vol_{g,n-1}(L_{\nset{n}\setminus\{1, j\}},x) \\
 & \quad  + \sum_{j=2}^n \int_{0}^{\abs{L_1-L_j}} dx\, x f(x, L_1, L_j)
	\Vol_{g,n-1}(L_{\nset{n}\setminus\{1, j\}},x) \\
 & \quad + \iint_{0 \leq x+y \leq L_1} dxdy\,\frac{xy}{2}(L_1 - x-y)
	\Vol_{g-1,n+1}(L_{\nset{n}\setminus 1},x,y) \\
 &  + \sum_{\substack{g_1 + g_2 = g \\ \mathcal{I} \disjoint \mathcal{J} = \nset{n}\setminus 1}}
	\iint_{0 \leq x+y \leq L_1} dxdy\,\frac{xy}{2}(L_1 - x-y)
	\Vol_{g_1,n_1}(L_{\mathcal{I}},x)\Vol_{g_2,n_2}(L_{\mathcal{J}},y),
  \end{split}
	\label{eq:RecursionFormula}
\end{equation}
where
\begin{equation*}
f(x, y, z) =
\begin{cases}
 y - x & \text{if $y > z$} \\
 y & \text{if $y < z$}.
\end{cases}
\end{equation*}

Note that in the formula above, each integral summand is coming from one
of the groupings enumerated above. The integrand consists of the product
of periods of the torus action ($x$ or $xy$) times the appropriate
weighting from the partition of unity (length of the edge being removed)
times the volume of the symplectic quotient.

The term coming from case (3) has double the edge weighting, due to the
fact that the edge appears twice when traversing the boundary. This
factor of 2 is compensated by a factor of $\frac{1}{2}$ coming from
the fact that when removing the edge from a graph, there is no way to
distinguish the two newly created boundaries. Thus, $\RG_{g-1, n+1}(x,
y, L_{\nset{n} \setminus 1})$ double counts because the $x$-length
boundary is distinguished from the $y$-length boundary.

The double-edge contribution in case (4) is compensated for because the
sum over $g_1 + g_2 = g$ and $\mathcal{I} \sqcup \mathcal{J} = \nset{n}
\setminus 1$ gives a double count over the groupings. The one exception
is when $g_1 = g_2$ and $n=1$. This case only appears once in the sum
(if at all), but the factor of $\frac{1}{2}$ is accounted for by the
order 2 symmetry of the underlying graph.

The above equation is a \emph{topological recursion} formula for the
volumes. In particular, the types of the ribbon graphs appearing on
the right hand side are simpler than the type on the left, where the
complexity of a graph of type $(g,n)$ can be measured by $2g - 2 +
n$. Implicit in the above computation is the fact that $(g,n)\neq
(0,3), (1,1)$. These can be considered the base cases for the
recursion, as all other volume computations can be reduced to knowing
$\Vol_{0,3}(L_{\nset{3}})$ and $\Vol_{1,1}(L)$. Luckily, these complexes
are simple enough to calculate their volumes by hand, which we now
proceed to do.

\subsection*{Volume of $\RG_{0,3}(L_1, L_2, L_3)$}

The ribbon graph complex of type $(0,3)$ has dimension $6g-6 + 2n =
0$, so we are  integrating $\Omega^0 = 1$ over a 0-dimensional space.
In other words, $\Vol_{0,3}(L_{\nset{3}})$ is a discrete count of
metric ribbon graphs of specified perimeter lengths. The set of all
ribbon graphs of type $(0,3)$ can be found in
Figure~\ref{fig:0-3Graphs}. Note that the automorphism groups are all
trivial. Furthermore, once the perimeters are fixed, there is a unique
metric ribbon graph which realizes those perimeters. 

In particular, if
$L_i + L_j > L_k$ for all distinct $i, j, k$ then only the theta graph
is possible. The set of perimeter equations
\begin{align*}
  \ell_1 + \ell_2 & = L_1 \\
  \ell_2 + \ell_3 &= L_2 \\
  \ell_1 + \ell_3 &= L_3
\end{align*}
has a unique solution with all $l_i > 0$.
If, for some $i, j, k$ we have  $L_i + L_j = L_k$ then the only
possible graph is the figure-eight, with the two boundary loops
labeled $i$ and $j$. Finally, if $L_i + L_j < L_k$ for some $i, j, k$
then the graph is a dumbell, with the loop boundaries labelled by $i$
and $j$.
We conclude that
\begin{equation*}
  \Vol_{0,3}(L_1, L_2, L_3) = 1.
\end{equation*}

Note that this is the only 0-dimensional ribbon graph complex, and
therefore it is the only case where a non-trivalent graph contributes
to the volume.

\subsection*{Volume of $\RG_{1,1}(L)$}

For graphs of type $(1,1)$, the dimension of the ribbon graph complex
is $6g-6+2n = 2$. Hence we integrate $\Omega$ over $\RG_{1,1}(L)$. As
illustrated in Figure~\ref{fig:1-1Graphs}, there is a single graph
with the correct number of edges, whose automorphism group is
$\ZZ_6$. If the edges are labeled in such a way that when traversing
the boundary we encounter (in order) $\ell_1, \ell_2, \ell_3, \ell_1,
\ell_2, \ell_3$, then we have
\begin{equation*}
  \Omega = d\ell_1 \wedge (d\ell_2 + d\ell_3) + d\ell_2\wedge d\ell_3.
\end{equation*}

If we fix $2(\ell_1 + \ell_2 + \ell_3) = L$ then $d\ell_3 = -d\ell_1 -
d\ell_2$ so that
\begin{equation*}
  \Omega \biggr\rvert_{\RG_{1,1}(L)} = d\ell_1 \wedge d\ell_2,
\end{equation*}
and we have
\begin{equation*}
  \iint\limits_{\ell_1 + \ell_2 \leq \tfrac{1}{2} L} \Omega = \frac{1}{8}
  L^2.
\end{equation*}
After dividing by the order of the automorphism group we conclude that
\begin{equation*}
  \Vol_{1,1}(L) = \frac{1}{48} L^2.
\end{equation*}

\section{Virasoro constraints}
\label{sect:Virasoro}
The DVV formula \cite{MR1083914}, or Virasoro constraints, for $\psi$-class
intersections is
\begin{equation}
  \begin{split}
	\intersect{\tau_{d_1} \cdots \tau_{d_n}}_{g} &=
	\sum_{j=2}^{n} \frac{(2d_1 + 2d_j - 1)!!}{(2d_1 + 1)!! (2d_j -
	1)!!} \intersect{\tau_{d_1 + d_j - 1} \tau_{d_{\nset{n} \setminus
	\{1, j\} }}}_{g} \\
	& \quad + \frac{1}{2} \sum_{a+b = d_1 - 2}
	\frac{(2a+1)!!(2b+1)!!}{(2d_1+1)!!}
	\Biggl[ \intersect{\tau_a\tau_b \tau_{d_{\nset{n}\setminus
	1}}}_{g-1}  \\
	&  \quad\quad\quad + \sum_{\substack{g_1 + g_2 = g \\ \mathcal{I} \sqcup
	\mathcal{J} = \nset{n} \setminus 1}}^{\text{stable}} 
	\intersect{\tau_a\tau_{d_{\mathcal{I}}}}_{g_1}
	\intersect{\tau_b \tau_{d_{\mathcal{J}}}}_{g_2}\Biggr],
  \end{split}
  \label{eq:DVV}
\end{equation}
where we are using the notation
\begin{equation*}
  \intersect{\tau_{d_1} \cdots \tau_{d_n}}_g =
  \int_{\CompactModuli_{g,n}}\psi_1^{d_1}\cdots \psi_n^{d_n},
\end{equation*}
and
\begin{equation*}
  (2k+1)!! = (2k+1)(2k-1) \cdots 1 = \frac{(2k+1)!}{2^k k!}.
\end{equation*}
The stable sum in the last term means we restrict to terms where $(g_i,
n_i)$ satisfy $2g_i - 2 + n_i > 0$, where $n_1 =
\FiniteCount{\mathcal{I}} + 1$ and $n_2 = \FiniteCount{\mathcal{J}} +
1$. 

The topological recursion formula \eqref{eq:RecursionFormula} is
equivalent to the above DVV relation, if one looks at terms of fixed
degree in the $L_i$'s. To that end, if $P(L_{\nset{n}})$ is a
polynomial in $L_1^2, \ldots, L_n^2$ then we denote
\begin{equation*}
  [d_1 \cdots d_n] P(L_{\nset{n}})
\end{equation*}
to be the coefficient in $P$ of the monomial $L_1^{2d_1}\cdots
L_n^{2d_n}$. As seen in \eqref{eq:VolumePsiRelation}, we have
\begin{align*}
  [d_1 \cdots d_n] \Vol_{g,n}(L_{\nset{n}}) &= \frac{1}{\prod
  2^{d_i} d_i!} \int_{\CompactModuli_{g,n}} \psi_1^{d_1} \cdots
  \psi_n^{d_n} \\
  &= \frac{1}{\prod 2^{d_i}d_i!} \intersect{\tau_{d_1}\cdots
  \tau_{d_n}}_g,
\end{align*}
where the coefficient is non-zero if and only if $d_1 + \cdots + d_n =
d = 3g-3 + n$.

Note, however, that the topological recursion formula has
$L_1\Vol_{g,n}$ on the left hand side. In fact, it turns out to
simplify the calculations if we consider the differentiated
topological recursion equation - namely differentiate both sides by
$L_1$. Then the left hand side gives
\begin{equation}
  [d_1 \cdots d_n] \frac{\partial}{\partial L_1} L_1
  \Vol_{g,n}(L_{\nset{n}}) = 
  \frac{2d_1 + 1}{\prod 2^{d_{i}} d_{i}!}
  \intersect{\tau_{d_{1}}\cdots \tau_{d_n}}_g.
  \label{eq:lhsRecursionCoefficients}
\end{equation}

The differentiated topological recursion formula becomes somewhat
simpler:
\begin{equation}
  \begin{split}
	\frac{\partial}{\partial L_1} L_1 &\Vol_{g,n}  (L_{\nset{n}}) = \\ 
	& \quad \sum_{j=2}^{n} \int_{0}^{L_1 + L_j}\frac{x}{2} \Vol_{g,
	n-1}(x, L_{\nset{n} \setminus \{1, j\}})dx \\
	&  + \sum_{j=2}^{n}
	\int_{0}^{\abs{L_1 - L_j}} \frac{x}{2} \Vol_{g, n-1}(x,
	L_{\nset{n} \setminus \{1, j\}}) dx \\
	&  + \iint_{0 \leq x+y \leq L_1} \frac{xy}{2} \Vol_{g-1,
	n+1}(x, y, L_{\nset{n} \setminus \{1\}})dx\,dy \\
	&  + \sum_{\substack{g_1 + g_2 = g \\ \mathcal{I} \sqcup
	\mathcal{J} = \nset{n} \setminus \{1 \}}}
	\iint_{0 \leq x+y \leq L_1} \frac{xy}{2} \Vol_{g_1, n_1}(x,
	L_{\mathcal{I}}) \Vol_{g_2, n_2}(y, L_{\mathcal{J}}) dx\,dy.
  \end{split}
  \label{eq:DiffRecursionFormula}
\end{equation}

To calculate the matching monomial coefficient on the right hand side of
the recursion formula, we must explicitly evaluate the above integrals.

For a fixed integer $k \geq 0$ we have
\begin{equation}
  \begin{split}
	\int_{0}^{L_1 + L_j} \frac{x}{2}x^{2k}\,dx +
	\int_{0}^{\abs{L_1 - L_j}} & \frac{x}{2}x^{2k}\,dx  \\
	& = \frac{1}{2(2k+2)}(L_1 + L_j)^{2k+2} +
	\frac{1}{2(2k+2)}(L_1 - L_j)^{2k+2} \\
	&= \frac{1}{2(2k+2)} \sum_{r=0}^{2k+2}
	  \binom{2k+2}{r}
	  \left[ L_1^r L_j^{2k+2-r} + (-1)^r L_1^r L_j^{2k+2 - r} \right]
	  \\
	  &=  \sum_{s=0}^{k+1} \frac{(2k+1)!}{(2s)!(2k+2 - 2s)!}
	  L_1^{2s} L_j^{2(k+1 - s)}.
  \end{split}
  \label{eq:IntegratedUnstableTRF}
\end{equation}
By comparing degrees, we see that the coefficient of the $[d_1\cdots
d_n]$ term coming from the above integral is equal to
\begin{multline*}
    \frac{\bigl(2(d_1 + d_j) - 1\bigr)!}{(2d_1)!(2d_j)!}
   \bigl[d_1+d_j-1\,
   \prod_{i\neq 1,j}d_i \bigr] \Vol_{g, n-1} (x, L_{\nset{n}
  \setminus \{ 1, j\}}) \\
    = \frac{\bigl(2(d_1 + d_j) - 1\bigr)!}{(2d_1)!(2d_j)!}
  \frac{\intersect{\tau_{d_1 + d_j
  - 1} \tau_{\nset{n} \setminus\{1,j\}}}_g}{2^{d_1 + d_j - 1}2^{d_{\nset{n} \setminus \{1,j\}}} (d_1 +
  d_j - 1)! d_{\nset{n}\setminus\{1,j\}}!} .
\end{multline*}

We also calculate the double integrals by fixing integers $a, b
\geq 0$:
\begin{equation*}
  \iint\limits_{0 \leq x+y \leq L_1} \frac{xy}{2} x^{2a}y^{2b}\,dx\,dy
  = \frac{1}{2} \frac{(2a+1)! (2b+1)!}{(2(a + b +
  2))!}L_1^{2(a + b + 2)}.
\end{equation*}
Hence terms containing $L_1^{2d_1}$ must have $a+b = d_1 - 2$. 

Assembling the individual contributions yields
\begin{equation*}
  \begin{split}
	\frac{2d_1 + 1}{\prod 2^{d_i} d_i !} &
	\intersect{\tau_{d_1} \cdots \tau_{d_n}}_g \\
  &= \sum_{j=2}^{n} 
   \frac{\bigl(2(d_1 + d_j) - 1\bigr)!}{(2d_1)!(2d_j)!}
  \frac{\intersect{\tau_{d_1 + d_j
  - 1} \prod_{i\neq 1, j}\tau_{d_i}}_g}{2^{d_1 + d_j - 1} (d_1 +
  d_j - 1)! \prod_{i\neq 1, j} 2^{d_i}d_i!} \\
  & \quad + 
  \frac{1}{2} \sum_{a+b = d_1 - 2} \frac{(2a+1)!(2b+1)!}{(2d_1)!}
  \frac{1}{2^a a! 2^b b!}\frac{1}{\prod_{i\neq 1}2^{d_i} d_i!}
  \Biggl[ \intersect{\tau_a\tau_b \prod_{i\neq
  1}\tau_{d_i}}_{g-1} \\
  & \quad\quad + 
  \sum_{\substack{g_1 + g_2 = g \\ \mathcal{I} \sqcup \mathcal{J} =
  \nset{n} \setminus\{1\}}} \intersect{\tau_a \prod_{i \in
  \mathcal{I}}\tau_{d_i}}_{g_1} 
  \intersect{\tau_b \prod_{i\in\mathcal{J}}\tau_{d_i}}_{g_2}
  \Biggr].
  \end{split}
\end{equation*}

Canceling matching terms and using the relation
\begin{equation*}
  (2k-1)!! = \frac{(2k)!}{2^k k!}
\end{equation*}
gives the DVV equation \eqref{eq:DVV}.

We note that \cite{MR2257394, MR2483941, mulase-2009} all obtained the same
results, in some cases using similar
ideas. However, in those situations a scaling limit argument was
always needed to access the $\psi$-class terms. In the present case,
the derivation of the DVV is much simpler, due to the fact that no
rescaling is necessary.
\section{Eynard-Orantin recursion}
\label{sect:EO_Recursion}
In this section we prove that the topological recursion formula
presented above is equivalent to the Eynard-Orantin recursion for the
spectral curve $x = \frac{1}{2}y^2$. The main idea is to take the
Laplace transform the the recursion formula.

To that end, we define
\begin{equation*}
  W_{g,n}( z_1, \ldots,  z_n) = \idotsint e^{-z_{\nset{n}}\cdot
  L_{\nset{n}}}
  L_1 \cdots L_n \Vol_{g,n}(L_1, \ldots, L_n)
  dL_1\cdots dL_n,
\end{equation*}
where $z_{\nset{n}}\cdot L_{\nset{n}} =  z_1L_1 + \cdots z_n L_n  $ and the integration is
performed over $[0, \infty)^n$. If
we take the Laplace transform of $L_2\cdots L_n$ times the recursion formula then the left
hand side becomes $W_{g,n}( z_1, \ldots,  z_n)$. To evaluate
the integrals on the right hand side we swap the order of integration,
as explained below.

In general, if $V(x,y)$ denotes a polynomial in $x^2$ and $y^2$ and
\begin{equation*}
  W(z_1,z_2) = \int_{0}^{\infty}\int_{0}^{\infty} e^{-xz_1 -
  yz_2}xyV(x,y) dxdy
\end{equation*}
then
\begin{equation}
  \begin{split}
  \int_{0}^{\infty}dL_1 & e^{- z_1 L_1} \int\int_{0\leq x+y \leq
  L_1} dxdy xy(L_1 - x- y)V(x,y) \\
  &= \int_{0}^{\infty}dx \int_{0}^{\infty}dy \int_{x+y}^{\infty}dL_1 xy
  \left( -\frac{\partial}{\partial z_1} - (x+y)
  \right)e^{- z_1L_1}V(x,y) \\
  &= \frac{1}{ z_1^2}\int_{0}^{\infty}dx\int_{0}^{\infty}dy xy
  V(x,y)e^{- z_1x -  z_1 y} \\
  &= \frac{1}{ z_1^2}W( z_1,  z_1).
  \end{split}
  \label{eq:StableLaplace}
\end{equation}

For the term involving boundary $j$, we define
\begin{equation*}
	F(x, L_1, L_j) = 
  \begin{cases}
	L_1 & \text{if $L_1 < L_j$, $x < L_j-L_1$} \\
	L_1 - x & \text{if $L_j < L_1$, $x < L_1 - L_j$} \\
	\frac{1}{2}(L_1 + L_j - x) & \text{if $\abs{L_1 - L_j} < x < L_1 +
	L_j$},
  \end{cases}
\end{equation*} then calculate
\begin{multline}
  \int_{0}^{\infty}dL_j L_j e^{- z_jL_j}\int_{0}^{\infty}dL_1
  e^{- z_1L_1}\int_{0}^{L_1 + L_j} dx x F(x, L_1, L_j) V(x) \\
  =-\frac{\partial}{\partial z_j}
  \left[ \frac{1}{ z_1^2 z_j( z_1^2 -  z_j^2)}
  \left(  z_1^2 W( z_j) -  z_j^2W( z_1)
  \right)\right],
  \label{eq:UnstableLaplace}
\end{multline}
where we have again adopted the convention that $W(z)$ is the Laplace
transform of $xV(x)$, under the assumption that $V(x)$ is a polynomial
in $x^2$.
This equation can be verified by Mathematica, or calculated easily by hand by
swapping the order of integration and
introducing the change of variables
\begin{align*}
  u &= L_1 + L_j - x \\
  v &= L_j - L_1.
\end{align*}

Coming back to the topological recursion formula
\eqref{eq:RecursionFormula}, we use \eqref{eq:UnstableLaplace} to
calculate the Laplace transform of the first line of the formula, and
\eqref{eq:StableLaplace} for the remainder of the terms. The result is
a recursion formula for $W_{g,n}$:
\begin{lemma}
\begin{equation}
  \begin{split}
  W_{g,n}(z_{\nset{n}}) &= \sum_{j=2}^{n} -\frac{\partial}{\partial z_j}
  \left[ \frac{z_j}{(z_1 z_j)^2(z_1^2 - z_j^2)} 
  \left( z_1^2 W_{g, n-1}(z_{\nset{n} \setminus 1}) 
  - z_j^2 W_{g, n-1}(z_{\nset{n}\setminus j}) \right)\right] \\
  & \quad+ \frac{1}{2z_1^2} W_{g-1, n+1}(z_1, z_{\nset{n}}) \\
  & \quad+ \frac{1}{2z_1^2} \sum_{\substack{g_1 + g_2 = g \\ \mathcal{I}
  \sqcup \mathcal{J} = \nset{n} \setminus 1}} 
  W_{g_1, n_1}(z_1, z_{\mathcal{I}})W_{g_2, n_2}(z_1,
  z_{\mathcal{J}}),
\end{split}
  \label{eq:LaplaceRecursionFormula}
\end{equation}
where $n_1 = \FiniteCount{\mathcal{I}} + 1$, $n_2 =
\FiniteCount{\mathcal{J}} + 1$ and the summation in the last line is
taken over all pairs $(g_1, \mathcal{I})$, $(g_2, \mathcal{J})$
subject to the stability condition $2g_i - 2 + n_i > 0$.
\end{lemma}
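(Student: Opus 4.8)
The plan is to obtain \eqref{eq:LaplaceRecursionFormula} by applying the full $n$-fold Laplace transform directly to the topological recursion \eqref{eq:RecursionFormula}, reducing every term to one of the two integral identities \eqref{eq:StableLaplace} and \eqref{eq:UnstableLaplace} already established above. Concretely, I would multiply both sides of \eqref{eq:RecursionFormula} by $L_2\cdots L_n$ and integrate against $e^{-z_{\nset{n}}\cdot L_{\nset{n}}}$ over $[0,\infty)^n$. On the left-hand side the integrand becomes $L_1 L_2\cdots L_n\,\Vol_{g,n}$, which is exactly the integrand defining $W_{g,n}(z_{\nset{n}})$, so the left-hand side returns $W_{g,n}$ immediately. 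The right-hand side is then handled grouping by grouping, using the fact (from \eqref{eq:VolumePsiRelation}) that each volume is a polynomial in the squares of its arguments, so that all the relevant Gaussian-type integrals converge absolutely and Fubini legitimizes every interchange of integration order.

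For the two $j$-sum terms (cases (1) and (2)) I would first observe that they combine into the single integral $\sum_{j=2}^n \int_0^{L_1+L_j} dx\, x\,F(x,L_1,L_j)\,\Vol_{g,n-1}(x,L_{\nset{n}\setminus\{1,j\}})$: on the range $\abs{L_1-L_j}<x<L_1+L_j$ the weight $\tfrac12(L_1+L_j-x)$ is precisely the third branch of $F$, while on $0<x<\abs{L_1-L_j}$ the weight $f(x,L_1,L_j)$ matches the first two branches of $F$. Because the volume is polynomial in $x^2$ and in the squares of the remaining \emph{spectator} lengths $L_i$ ($i\neq 1,j$), I would transform the spectators first, converting $\Vol_{g,n-1}(x,L_{\nset{n}\setminus\{1,j\}})$ into a polynomial $V(x)$ in $x^2$ whose coefficients depend on the $z_i$, and leaving exactly the triple integral in $L_1,L_j,x$ to which \eqref{eq:UnstableLaplace} applies. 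Restoring the spectators in the two resulting terms $z_1^2 W(z_j)$ and $z_j^2 W(z_1)$ turns them into $W_{g,n-1}(z_{\nset{n}\setminus 1})$ and $W_{g,n-1}(z_{\nset{n}\setminus j})$ respectively, which is the first line of \eqref{eq:LaplaceRecursionFormula}.

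For cases (3) and (4) the same polynomiality lets me transform the spectators first and then invoke \eqref{eq:StableLaplace} in the variables $L_1,x,y$. In case (3) one has $V(x,y)=\Vol_{g-1,n+1}(x,y,L_{\nset{n}\setminus 1})$, and the extra factor $\tfrac12$ in the integrand produces $\tfrac{1}{2z_1^2}W_{g-1,n+1}(z_1,z_1,z_{\nset{n}\setminus 1})$, the second line. In case (4) the integrand factors as $\Vol_{g_1,n_1}(x,L_{\mathcal{I}})\,\Vol_{g_2,n_2}(y,L_{\mathcal{J}})$, so after transforming the disjoint spectator sets the $x$- and $y$-transforms split across the product; \eqref{eq:StableLaplace} then evaluates both new boundaries at $z_1$, giving $\tfrac{1}{2z_1^2}\sum W_{g_1,n_1}(z_1,z_{\mathcal{I}})\,W_{g_2,n_2}(z_1,z_{\mathcal{J}})$, the final line. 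The stability restriction $2g_i-2+n_i>0$ carries over without extra work, since unstable volumes vanish by the convention $\Vol_{g,n}=0$ for $2g-2+n\le 0$.

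The genuinely delicate point is not any single computation but the bookkeeping around the spectator variables and the order of integration: I must confirm that transforming the non-distinguished boundary lengths commutes with the $L_1$--$L_j$ integral (respectively the $L_1$--$x$--$y$ integral), and that the object actually fed into \eqref{eq:UnstableLaplace} and \eqref{eq:StableLaplace} really is of the polynomial-weighted form $xV(x)$ (respectively $xyV(x,y)$) required by their hypotheses. This is exactly where polynomiality of the volumes together with absolute convergence is used; once it is in place the proof is pure assembly, and the only remaining labor is matching the prefactors and the argument lists, which I have indicated above.
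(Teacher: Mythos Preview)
Your proposal is correct and follows essentially the same route as the paper: multiply \eqref{eq:RecursionFormula} by $L_2\cdots L_n$, Laplace-transform in all variables, combine the two $j$-sum terms into the single $F$-weighted integral, and then invoke \eqref{eq:UnstableLaplace} for that piece and \eqref{eq:StableLaplace} for the double-integral pieces. If anything, your discussion of the spectator-variable bookkeeping and the Fubini justification via polynomiality of the volumes makes explicit a point the paper leaves implicit.
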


The goal is to equate this recursion formula to Eynard-Orantin
recursion for the Airy curve $x = \frac{1}{2}y^2$. To do so, we need
to explicity evaluate the residues involved in
\eqref{eq:AiryResidueRecursion}. As a
starting point, if we have a function $W(\zeta_1, \zeta_2)$, which we
assume to be a polynomial in $\zeta_1^{-2}$ and $\zeta_2^{-2}$ then we
have
\begin{align*}
  \Res_{\zeta \rightarrow 0} E(\zeta, z_1) W(\zeta, -\zeta) d\zeta
  \otimes d(-\zeta) 
  &= \Res_{\zeta \rightarrow 0} \frac{-1}{2\zeta} \frac{1}{\zeta^2 -
  z_1^2} W(\zeta, \zeta)d\zeta \otimes dz_1 \\
  &= \frac{1}{2z_1^2} W(z_1, z_1) dz_1.
\end{align*}
Recall that the Eynard kernel $E(\zeta, z_1)$ is equal to
\begin{equation*}
  E(\zeta, z_1) = \frac{1}{2\zeta(\zeta^2 - z_1^2)}
  \frac{1}{d\zeta}\otimes dz_1.
\end{equation*}

The unstable terms in the Eynard-Orantin recursion have a more
complicated residue calculation, due to the diagonal pole in
$W_{0,2}$. We have
\begin{multline*}
  \Res_{\zeta\rightarrow 0} E(\zeta, z_1) 
  \bigl( W_{0,2}(\zeta, z_j)W(-\zeta) + W_{0,2}(-\zeta, z_j)W(\zeta)
  \bigr)
  d\zeta \otimes d(-\zeta) \\
  = \Res_{\zeta\rightarrow 0} \frac{1}{2\zeta} \frac{-1}{\zeta^2 -
  z_1^2}
  \left[ \frac{1}{(\zeta - z_j)^2} + \frac{1}{(\zeta + z_j)^2}
  \right]W(\zeta) d\zeta \otimes dz_1 \\
  = \Res_{\zeta\rightarrow 0} \frac{1}{\zeta( z_1^2 - \zeta^2 )} 
  \left[ -\frac{\partial}{\partial z_j} \frac{z_j}{z_j^2 - \zeta^2}
  \right]W(\zeta) d\zeta \otimes dz_1,
\end{multline*}
where we are assuming that $W(\zeta)$ is a polynomial in
$\zeta^{-2}$. To finish the calculation, we have the following
\begin{lemma}
For any integer $k \geq 0$
\begin{equation*}
  \Res_{\zeta\rightarrow 0} \frac{1}{\zeta}\frac{1}{z_1^2 -
  \zeta^2}\frac{1}{z_j^2 - \zeta^2} \zeta^{-2k} d\zeta
  =
  \frac{1}{z_1^2 z_j^2 (z_1^2 - z_j^2)} \left( z_1^2 z_j^{-2k} -
  z_j^2z_1^{-2k} \right).
\end{equation*}
\end{lemma}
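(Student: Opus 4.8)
The plan is to reduce the whole computation to a single elementary residue by means of a partial fraction decomposition in the variable $\zeta^2$. First I would write
\[
\frac{1}{(z_1^2 - \zeta^2)(z_j^2 - \zeta^2)} = \frac{1}{z_1^2 - z_j^2}\left( \frac{1}{z_j^2 - \zeta^2} - \frac{1}{z_1^2 - \zeta^2} \right),
\]
which is verified by clearing denominators in the ansatz $1 = A(z_j^2 - \zeta^2) + B(z_1^2 - \zeta^2)$ and evaluating at $\zeta^2 = z_1^2$ and $\zeta^2 = z_j^2$ to read off $A = -1/(z_1^2 - z_j^2)$ and $B = 1/(z_1^2 - z_j^2)$. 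After multiplying through by $\zeta^{-2k-1}$, the integrand of the lemma becomes a difference of two terms of the identical shape $\zeta^{-2k-1}/(a^2 - \zeta^2)$, with $a = z_j$ and $a = z_1$.

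Next I would establish the auxiliary identity
\[
\Res_{\zeta \rightarrow 0} \frac{\zeta^{-2k-1}}{a^2 - \zeta^2} = a^{-2k-2}
\]
for any nonzero constant $a$. Since $z_1, z_j \neq 0$, the factor $1/(a^2 - \zeta^2)$ is holomorphic at the origin and admits the geometric expansion $\sum_{m \geq 0} \zeta^{2m}/a^{2m+2}$ on a punctured neighborhood of $0$. Multiplying by $\zeta^{-2k-1}$ shifts the exponents to $2m - 2k - 1$, and extracting the coefficient of $\zeta^{-1}$ (the residue) selects exactly the $m = k$ term, giving $a^{-2k-2}$.

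Finally I would recombine the pieces. Substituting the two instances of the auxiliary residue into the partial fraction decomposition yields
\[
\Res_{\zeta \rightarrow 0} \frac{\zeta^{-2k-1}}{(z_1^2 - \zeta^2)(z_j^2 - \zeta^2)} = \frac{1}{z_1^2 - z_j^2}\left( z_j^{-2k-2} - z_1^{-2k-2} \right),
\]
and factoring $z_1^{-2} z_j^{-2}$ out of the parenthesis rewrites this as $\frac{1}{z_1^2 z_j^2 (z_1^2 - z_j^2)}\bigl( z_1^2 z_j^{-2k} - z_j^2 z_1^{-2k} \bigr)$, which is precisely the claimed right-hand side. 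I do not anticipate a genuine obstacle here; the computation is routine. The only points demanding care are tracking the sign in the partial fraction coefficients and confirming that the purely cosmetic final rearrangement reproduces the exact normalization stated in the lemma.
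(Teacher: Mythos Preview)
Your proof is correct, but it takes a different route from the paper's. The paper expands both factors $\frac{1}{z_1^2-\zeta^2}$ and $\frac{1}{z_j^2-\zeta^2}$ simultaneously as geometric series in $\zeta^2$, multiplies them, and extracts the residue as the finite sum $\frac{1}{z_1^2 z_j^2}\sum_{r+s=k} z_1^{-2r} z_j^{-2s}$; it then multiplies and divides by $z_1^2 - z_j^2$ to telescope that sum into the closed form. Your partial-fraction decomposition in $\zeta^2$ trades that telescoping step for the up-front identity $\frac{1}{(z_1^2-\zeta^2)(z_j^2-\zeta^2)} = \frac{1}{z_1^2-z_j^2}\bigl(\frac{1}{z_j^2-\zeta^2}-\frac{1}{z_1^2-\zeta^2}\bigr)$, after which only a single geometric expansion is needed and the residue is read off immediately. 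Both are equally elementary; your version arguably isolates the dependence on $z_1$ and $z_j$ more transparently, while the paper's version makes explicit the intermediate symmetric sum $\sum_{r+s=k} z_1^{-2r} z_j^{-2s}$ before collapsing it.
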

\begin{proof}
 We expand the left hand side, assuming that $\zeta$ is closer to 0
 than both $z_1$ and $z_j$:
 \begin{align*}
   \Res_{\zeta\rightarrow 0} \frac{1}{z_1^2 -
   \zeta^2}\frac{1}{z_j^2 - \zeta^2} \zeta^{-2k-1}d\zeta
   &= \Res_{\zeta\rightarrow 0} \frac{1}{z_1^2 z_j^2} 
   \left( 1 + \frac{\zeta^2}{z_1^2} + \cdots  \right)
   \left( 1 + \frac{\zeta^2}{z_j^2} + \cdots \right)
   \zeta^{-2k-1}d\zeta \\
   &= \frac{1}{z_1^2 z_j^2} \sum_{r+s = k} z_1^{-2r}z_j^{-2s} \\
   &= \frac{z_1^2 - z_j^2}{z_1^2 z_j^2(z_1^2 - z_j^2)} \sum_{r=0}^{k}
   z_1^{-2r} z_j^{-2(k-r)} \\
   &= \frac{1}{z_1^2 z_j^2(z_1^2 -
   z_j^2)} ({z_1^2 z_j^{-2k} - z_1^{-2k}z_j^2}).
 \end{align*}
\end{proof}

Putting everything together gives us
\begin{theorem}
  The topological recursion formula \eqref{eq:RecursionFormula} for the symplectic
  volume of the ribbon graph complex is equivalent to the
  Eynard-Orantin recursion for the spectral curve $x =
  \frac{1}{2}y^2$.
\end{theorem}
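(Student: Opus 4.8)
The plan is to start from the Eynard--Orantin residue recursion \eqref{eq:AiryResidueRecursion} for the Airy curve and evaluate every residue explicitly, showing that the result is precisely the Laplace-transformed volume recursion \eqref{eq:LaplaceRecursionFormula}. Since \eqref{eq:LaplaceRecursionFormula} was already obtained as the Laplace transform of \eqref{eq:RecursionFormula}, and the Laplace transform is injective on the space of polynomials that occur here, this identification yields the claimed equivalence. The whole argument runs by induction on the complexity $2g-2+n$, the inductive hypothesis being that each lower-complexity invariant $W_{g',n'}$ is a polynomial in the variables $z_i^{-2}$; this polynomiality is exactly the hypothesis under which the residue computations preceding the statement are valid, and the recursion manifestly preserves it.

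First I would split the bracket in \eqref{eq:AiryResidueRecursion} into \emph{stable} and \emph{unstable} contributions according to whether a factor of $\mathcal{W}_{0,1}$ or $\mathcal{W}_{0,2}$ occurs. The terms containing $\mathcal{W}_{0,1}=0$ vanish outright. The unstable terms are exactly those in the splitting sum with $(g_1,n_1)=(0,2)$ and $\mathcal{I}=\{j\}$, together with the mirror case $(g_2,n_2)=(0,2)$; these produce the pair $\mathcal{W}_{0,2}(\zeta,z_j)W_{g,n-1}(-\zeta,\ldots)+\mathcal{W}_{0,2}(-\zeta,z_j)W_{g,n-1}(\zeta,\ldots)$, while every remaining summand is stable.

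For the stable part I would apply the identity $\Res_{\zeta\to 0}E(\zeta,z_1)W(\zeta,-\zeta)\,d\zeta\otimes d(-\zeta)=\frac{1}{2z_1^2}W(z_1,z_1)\,dz_1$ established just before the Lemma. Applied to the diagonal term $\mathcal{W}_{g-1,n+1}(\zeta,-\zeta,z_2,\ldots,z_n)$ this gives the $\frac{1}{2z_1^2}W_{g-1,n+1}(z_1,z_1,\ldots)$ summand, and applied termwise to the stable pieces of the splitting sum it gives $\frac{1}{2z_1^2}\sum W_{g_1,n_1}(z_1,z_{\mathcal{I}})W_{g_2,n_2}(z_1,z_{\mathcal{J}})$. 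For the unstable part I would use the double-pole computation recorded before the Lemma to reduce the two $\mathcal{W}_{0,2}$ terms to $\Res_{\zeta\to 0}\frac{1}{\zeta(z_1^2-\zeta^2)}\bigl[-\frac{\partial}{\partial z_j}\frac{z_j}{z_j^2-\zeta^2}\bigr]W(\zeta)$, and then invoke the Lemma to evaluate this residue, recovering the first line $\sum_{j=2}^{n}-\frac{\partial}{\partial z_j}[\cdots]$ of \eqref{eq:LaplaceRecursionFormula}. Summing the stable and unstable contributions reproduces \eqref{eq:LaplaceRecursionFormula} exactly.

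Finally I would confirm that the two towers of invariants genuinely coincide, not merely satisfy the same recursion, by matching base cases: a direct computation gives $\int_0^\infty e^{-zL}L\,dL=z^{-2}$, so the transforms of $\Vol_{0,3}=1$ and $\Vol_{1,1}=\frac{1}{48}L^2$ equal $(z_1z_2z_3)^{-2}$ and $\frac{1}{8}z^{-4}$, which are exactly the stated $W_{0,3}$ and $W_{1,1}$ for the Airy curve. Together with the recursion this closes the induction. I expect the main obstacle to be the unstable terms: correctly combining the two $\mathcal{W}_{0,2}$ contributions, extracting the derivative $-\frac{\partial}{\partial z_j}$ from the diagonal double pole, and tracking the combinatorial factors so that the symmetric residue sum collapses to the single sum over $j$ appearing in \eqref{eq:LaplaceRecursionFormula}, all while verifying that the polynomiality hypothesis is preserved so that the residue lemmas remain applicable at each stage of the induction.
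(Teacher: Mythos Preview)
Your proposal is correct and follows essentially the same route as the paper: split \eqref{eq:AiryResidueRecursion} into stable and unstable pieces, evaluate the stable residues via the identity $\Res_{\zeta\to 0}E(\zeta,z_1)W(\zeta,-\zeta)\,d\zeta\otimes d(-\zeta)=\tfrac{1}{2z_1^2}W(z_1,z_1)\,dz_1$, evaluate the unstable $\mathcal{W}_{0,2}$ pieces via the Lemma, and identify the outcome with \eqref{eq:LaplaceRecursionFormula}. Your explicit framing of the argument as an induction on $2g-2+n$ with the polynomiality-in-$z_i^{-2}$ hypothesis, together with the base-case check and the appeal to injectivity of the Laplace transform, are welcome bits of rigor that the paper leaves implicit.
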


\bibliographystyle{hplain}
\bibliography{References}

\begin{thebibliography}{10}

\bibitem{borot-2009}
G.~Borot, B.~Eynard, M.~Mulase, and B.~Safnuk.
\newblock {A matrix model for simple Hurwitz numbers, and topological
  recursion}, {arXiv:0906.1206 [math-ph]}.

\bibitem{MR2480744}
V.~Bouchard, A.~Klemm, M.~Mari{\~n}o, and S.~Pasquetti.
\newblock Remodeling the {B}-model.
\newblock {\em Comm. Math. Phys.}, 287(1):117--178, 2009, {arXiv:0709.1458
  [math.AG]}.

\bibitem{MR2483754}
V.~Bouchard and M.~Mari{\~n}o.
\newblock Hurwitz numbers, matrix models and enumerative geometry.
\newblock In {\em From {H}odge theory to integrability and {TQFT}
  tt*-geometry}, volume~78 of {\em Proc. Sympos. Pure Math.}, pages 263--283.
  Amer. Math. Soc., Providence, RI, 2008, 0709.1458.

\bibitem{MR935529}
B.~H. Bowditch and D.~B.~A. Epstein.
\newblock Natural triangulations associated to a surface.
\newblock {\em Topology}, 27(1):91--117, 1988.

\bibitem{2010arXiv1009.2055C}
K.~M. {Chapman}, M.~{Mulase}, and B.~{Safnuk}.
\newblock {The Kontsevich constants for the volume of the moduli of curves and
  topological recursion}.
\newblock {\em ArXiv e-prints}, September 2010, 1009.2055.

\bibitem{MR1083914}
Robbert Dijkgraaf, Herman Verlinde, and Erik Verlinde.
\newblock Loop equations and {V}irasoro constraints in nonperturbative
  two-dimensional quantum gravity.
\newblock {\em Nuclear Phys. B}, 348(3):435--456, 1991.

\bibitem{MR674406}
J.~J. Duistermaat and G.~J. Heckman.
\newblock On the variation in the cohomology of the symplectic form of the
  reduced phase space.
\newblock {\em Invent. Math.}, 69(2):259--268, 1982.

\bibitem{eynard-2007}
B.~Eynard.
\newblock {Recursion between Mumford volumes of moduli spaces},
  {arXiv:0706.4403 [math.AG]}.

\bibitem{MR2439683}
B.~Eynard.
\newblock All order asymptotic expansion of large partitions.
\newblock {\em J. Stat. Mech. Theory Exp.}, (7):P07023, 34, 2008.

\bibitem{MR2569934}
B.~Eynard.
\newblock A matrix model for plane partitions.
\newblock {\em J. Stat. Mech. Theory Exp.}, (10):P10011, 72, 2009.

\bibitem{Eynard:vn}
B.~Eynard, A.K. Kashani-Poor, and O.~Marchal.
\newblock {A matrix model for the topological string I: Deriving the matrix
  model}, {arXiv:1003.1737 [math-ph]}.

\bibitem{eynard-2009}
B.~Eynard, M.~Mulase, and B.~Safnuk.
\newblock {The Laplace transform of the cut-and-join equation and the
  Bouchard-Marino conjecture on Hurwitz numbers}, {arXiv:0907.5224 [math.AG]}.

\bibitem{Eynard:yq}
B.~Eynard and N.~Orantin.
\newblock Geometrical interpretation of the topological recursion, and
  integrable string theories, {arXiv:0911.5096 [math-ph]}.

\bibitem{eynard-orantin-2007}
B.~Eynard and N.~Orantin.
\newblock {Weil-Petersson volume of moduli spaces, Mirzakhani's recursion and
  matrix models}, {arXiv:0705.3600 [math-ph]}.

\bibitem{MR2346575}
B.~Eynard and N.~Orantin.
\newblock Invariants of algebraic curves and topological expansion.
\newblock {\em Commun. Number Theory Phys.}, 1(2):347--452, 2007,
  math-ph/0702045.

\bibitem{MR2519749}
B.~Eynard and N.~Orantin.
\newblock Topological recursion in enumerative geometry and random matrices.
\newblock {\em J. Phys. A}, 42(29):293001, 117, 2009.

\bibitem{MR664117}
V.~Guillemin and S.~Sternberg.
\newblock Convexity properties of the moment mapping.
\newblock {\em Invent. Math.}, 67(3):491--513, 1982.

\bibitem{MR963064}
John~L. Harer.
\newblock The cohomology of the moduli space of curves.
\newblock In {\em Theory of moduli ({M}ontecatini {T}erme, 1985)}, volume 1337
  of {\em Lecture Notes in Math.}, pages 138--221. Springer, Berlin, 1988.

\bibitem{MR2328716}
M.~E. Kazarian and S.~K. Lando.
\newblock An algebro-geometric proof of {W}itten's conjecture.
\newblock {\em J. Amer. Math. Soc.}, 20(4):1079--1089, 2007.

\bibitem{2005math8384K}
{Y.-S.} {Kim} and K.~{Liu}.
\newblock {A simple proof of Witten conjecture through localization}.
\newblock {\em ArXiv Mathematics e-prints}, August 2005, arXiv:math/0508384.

\bibitem{MR1171758}
M.~Kontsevich.
\newblock Intersection theory on the moduli space of curves and the matrix
  {A}iry function.
\newblock {\em Comm. Math. Phys.}, 147(1):1--23, 1992.

\bibitem{MR2357474}
K.~Liu and H.~Xu.
\newblock New properties of the intersection numbers on moduli spaces of
  curves.
\newblock {\em Math. Res. Lett.}, 14(6):1041--1054, 2007, math.AG/0609367.

\bibitem{MR2348851}
K.~Liu and H.~Xu.
\newblock New results of intersection numbers on moduli spaces of curves.
\newblock {\em Proc. Natl. Acad. Sci. USA}, 104(35):13896--13900 (electronic),
  2007, {arXiv:0705.3564 [math.AG]}.

\bibitem{MR2482127}
K.~Liu and H.~Xu.
\newblock Recursion formulae of higher {W}eil-{P}etersson volumes.
\newblock {\em Int. Math. Res. Not. IMRN}, (5):835--859, 2009, {arXiv:0708.0565
  [math.AG]}.

\bibitem{MR2264808}
M.~Mirzakhani.
\newblock {Simple geodesics and {W}eil-{P}etersson volumes of moduli spaces of
  bordered {R}iemann surfaces}.
\newblock {\em Invent. Math.}, 167(1):179--222, 2007.

\bibitem{MR2257394}
M.~Mirzakhani.
\newblock Weil-{P}etersson volumes and intersection theory on the moduli space
  of curves.
\newblock {\em J. Amer. Math. Soc.}, 20(1):1--23 (electronic), 2007.

\bibitem{1998math.ph.11024M}
M.~{Mulase} and M.~{Penkava}.
\newblock {Ribbon Graphs, Quadratic Differentials on Riemann Surfaces, and
  Algebraic Curves Defined over \$$\backslash$bar Q\$}.
\newblock {\em ArXiv Mathematical Physics e-prints}, November 1998,
  arXiv:math-ph/9811024.

\bibitem{SafnukMulase-2010}
M.~Mulase and B.~Safnuk.
\newblock Combinatorial structures in topological recursion.
\newblock In preparation.

\bibitem{MR2379144}
M.~Mulase and B.~Safnuk.
\newblock Mirzakhani's recursion relations, {V}irasoro constraints and the
  {K}d{V} hierarchy.
\newblock {\em Indian J. Math.}, 50(1):189--218, 2008, math.QA/0601194.

\bibitem{mulase-2009}
M.~Mulase and N.~Zhang.
\newblock {Polynomial recursion formula for linear Hodge integrals},
  {arXiv:0908.2267 [math.AG]}.

\bibitem{norbury-2009}
P.~Norbury.
\newblock String and dilaton equations for counting lattice points in the
  moduli space of curves, {arXiv:0905.4141 [math.AG]}.

\bibitem{MR2483941}
A.~Okounkov and R.~Pandharipande.
\newblock Gromov-{W}itten theory, {H}urwitz numbers, and matrix models.
\newblock In {\em Algebraic geometry---{S}eattle 2005. {P}art 1}, volume~80 of
  {\em Proc. Sympos. Pure Math.}, pages 325--414. Amer. Math. Soc., Providence,
  RI, 2009.

\bibitem{MR919235}
R.~C. Penner.
\newblock The decorated {T}eichm\"uller space of punctured surfaces.
\newblock {\em Comm. Math. Phys.}, 113(2):299--339, 1987.

\bibitem{Safnuk-2010}
B.~Safnuk.
\newblock Generalizations of topological recursion.
\newblock In preparation.

\bibitem{MR2503971}
B.~Safnuk.
\newblock Integration on moduli spaces of stable curves through localization.
\newblock {\em Differential Geom. Appl.}, 27(2):179--187, 2009,
  {arXiv:0704.2530 [math.DG]}.

\bibitem{MR928904}
Daniel~D. Sleator, Robert~E. Tarjan, and William~P. Thurston.
\newblock Rotation distance, triangulations, and hyperbolic geometry.
\newblock {\em J. Amer. Math. Soc.}, 1(3):647--681, 1988.

\bibitem{MR1144529}
E.~Witten.
\newblock Two-dimensional gravity and intersection theory on moduli space.
\newblock In {\em Surveys in differential geometry ({C}ambridge, {MA}, 1990)},
  pages 243--310. Lehigh Univ., Bethlehem, PA, 1991.

\end{thebibliography}

\end{document}